\def\edo{\end{document}}
\newtheorem{theorem}{Theorem}[section]
\newtheorem{corollary}[theorem]{Corollary}
\newtheorem{lemma}[theorem]{Lemma}
\newtheorem{remark}[theorem]{Remark}
\def\divv{{\rm div }}
\def\rrd{{\mathbb{R}^d}}
\def\calf{{\mathcal{F}}}
\def\calo{{\mathcal{O}}}
\def\calb{{\mathcal{B}}}
\def\cald{{\mathcal{D}}}
\def\calx{{\mathcal{X}}}
\def\calk{{\mathcal{K}}}
\def\call{{\mathcal{L}}}
\def\calp{{\mathcal{P}}}
\def\vsp{\vspace*{1,5mm}\\ }
\def\bk{\bigskip }
\def\mk{\medskip }
\def\n{\noindent }
\def\dd{\displaystyle}
\def\barr{\begin{array}}
\def\earr{\end{array}}
\def\bit{\begin{itemize}}
\def\eit{\end{itemize}}
\def\FP{Fokker--Planck}
\def\1{^{-1}}
\def\nn{{\mathbb{N}}}
\def\rr{{\mathbb{R}}}
\def\9{{\infty}}
\def\lbb{{\lambda}}
\def\wt{\widetilde}
\def\ov{\overline}
\def\vf{{\varphi}}
\def\oo{{\omega}}
\def\ooo{{\Omega}}
\def\pp{{\partial}}
\def\vp{{\varepsilon}}
\def\ff{\forall }
\def\({\left(}
\def\){\right)}
\def\<{\left<}
\def\>{\right>}
\def\fpe{Fokker--Planck equation}
\def\fpes{Fokker--Planck equations}
\def\mkv{McKean--Vlasov}
\def\mkve{McKean--Vlasov equation}
\title{Nonlinear \fpes\ with~singular integral drifts and~McKean--Vlasov SDEs} 
\author{Viorel Barbu\thanks{Octav Mayer Institute of Mathematics of  Romanian Academy,  Ia\c si, Romania.  Email:~vb41@uaic.ro}} 
\date{}
\begin{document}
\maketitle

\begin{abstract}
\n In this work, one proves the well-posedness in the Sobolev space $H\1(\rrd)$ of the Cauchy problem $$\barr{r}
\dd\frac{\pp u}{\pp t}-\Delta\beta(u)+\divv((D(x)b(u)+K*u)u)=0\\
\mbox{ in $(0,\9)\times\rrd;$ $u(0,x)=u_0(x),$}\earr$$ where $d\ge2$, $\beta$ is a con\-ti\-nuous monotonically increasing function, \mbox{$D:\rrd\to\rrd$, $b:\rr\to\rr^+$} are appropriate functions and \mbox{$K\!\in\! C^1(\rrd\!\setminus\!0)$} is a singular kernel. One proves also the uniqueness of distributional solutions $u\in L^1\cap L^\9$ which are weakly (narrowly) continuous in $t$ from $[0,\9)$ to $L^1(\rrd)$ and  the uniqueness for the corresponding li\-nearized equations, As a consequence,  it follows the existence and uniqueness of strong solutions to the \mkv\ stochastic differential equation (SDE)
$$\begin{array}{l}
	dX_t=(D(X)b(u(t,X_t)+(K*u(t,\cdot))(X_t))dt +\(\dd\frac{2\beta(u(t,X_t))}{u(t,X_t)}\)dW_t\\
	\call(X_t)=u(t,\cdot),\ X(0)=X_0,\ u_0=\mathbb{P}\circ X^{-1}_0,
	\end{array}$$
where $\call(X_t)$ is the density of the probability law of the process $X_t$ with respect to the Lebesgue measure on $\rrd$. Moreover, the laws of solutions to this equation have the Markov property.\mk\\
{\bf MSC Codes:} 35B40, 35Q84, 60H10.\\
{\bf Keywords:} \fpe, stochastic, semigroup, \mkve.
\end{abstract}\vfill

\section{Introduction}\label{s1}
Here we are concerned with the nonlinear \fpe\ (abbre\-viated NFPE)

\begin{equation}\label{e1.1}
\barr{ll}
\dd\frac{\pp u}{\pp t}-\Delta\beta(u(t,x))+{\rm div}((D(x)b(u(t,x))\\
\qquad+(K*u(t,\cdot))(x)u(t,x))=0\mbox{ in }(0,\9)\times\rrd,\\
u(0,x)=u_0(x),\ x\in\rrd,\vspace*{-2mm} \earr 
\end{equation}
where $d\ge2$, $\beta:\rr\to\rr,\ b:\rr\to\rr^+,\ D:\rrd\to\rrd$, $K:\rrd\to\rrd$ are given functions to be made precise below and the symbol $*$ stands for the convolution product in the space $L^1(\rrd)$.

The following hypotheses will be in effect throughout this work.
\begin{itemize}
	\item[\rm(i)] $\beta\in C^1(\rr)$, $\beta(0)=0,\ \beta'(r)\ge\alpha>0,$ $\ff r\in\rr.$ 
	\item[\rm(ii)]  $D\in L^\9(\rrd;\rrd)\cap L^2(\rrd;\rrd), 
	 (\divv\ D)^-=0$. 
	\item[\rm(iii)] $b\in C^1(\rr),\ b(r)\ge0,\ \ff r\in\rr.$  	
	\item[\rm(iv)] $K\in C^1(\rrd\setminus0;\rrd)\cap L^1(B_1;\rrd)\cap L^2(B^c_1;\rrd)$ for some $1\le p\le2$ and  
	\begin{equation}\label{e1.2}
	\hspace*{-3,4mm}(\divv\, K)^-\in L^\9(\rrd),\, (K(x){\cdot} x)^-|x|\1\in L^\9(B_1),
	\end{equation}
where $B_r=\{x\in \rrd;\ |x|\le r\},\ B^c_r=\{x\in\rrd;\ |x|>r\},\ \ff r>0.$		
\end{itemize}
As is well known, NFPE \eqref{e1.1} is related to the \mkv\ SDE
\begin{equation}\barr{rcl}
	dX_t&=&(D(X_t)b(u(t,X_t))+(K*u(t,\cdot))(X_t))dt\label{e1.3}\vsp
	&&+\(\dd\frac{2\beta(u(t,X_t))}{u(t,X_t)}\)^{\frac12}dW_t,\ t\ge0,\earr\end{equation}in a probability space 
	$(\ooo,\calf,\mathbb{P})$ with normal filtration $(\calf_t)_{t>0}$ and an $(\calf_t)$-
	Brownian motion $W_t)$ through the stochastic representation
	\begin{equation}
	\call_{X_t}(x)=u(t,x),\ \ff t\ge0;\ u_0(x)=\mathbb{P}\circ X^{-1}_0,\ x\in\rrd.\label{e1.4}
\end{equation}Here,  $\call_{X_t}$ 
is the density of the marginal law $\mathbb{P}\circ X^{-1}_t$ of $X_t$  with respect to the Lebesgue measure.

In the special case $K\equiv0$, i.e. for   Niemitsky type drift, NFPE \eqref{e1.1} was studied in the works \cite{5}--\cite{7} (see, also, the book \cite{8}), where the existence of mild solutions in $L^1(\rrd)$ via nonlinear semigroup theory and uniqueness of distributional solutions was obtained under hypotheses (i)--(iii). As a  consequence, the existence and uniqueness of a strong solution to \eqref{e1.3} was derived via the superposition principle and the Yamata--Watanabe theorem. The main novelty of this work is the presence of $(K*u)u$ in the drift term of NFPE \eqref{e1.1} which from the point of statistical mechanics this term in \fpe\ models the long-range particle interactions (see \cite{10}--\cite{12}), as well as chemostatic biological processes (\cite{10}, \cite{16}, \cite{17}, \cite{19}). These equations can be seen as a part of a more general class of nonlinear \FP\ equations with irregular drift tgerms in the spatial variable. (See, e.g., the work \cite{13a} for recent results and references.) Below there are a few examples of such kernels $K$. 
 
 \bk\n{$\bf 1^\circ$.}  
 {\bf Riesz type kernels.} 
The kernel
\begin{equation}\label{e1.6}
	K(x)=\mu x|x|^{s-d-2},\ 0<s<d,\ \mu>0,\ d>2, 
\end{equation}
is derived from the Riesz potential by $K(x)\equiv\nabla I_s(x)$, where
$$ I_s(x)=-\dd\frac\mu{d-s}\ |x|^{s-d}\mbox{ if }0<s<d,\ d>2.$$
We note that, for $0<s<d$ and
$$\mu=(d-s)\pi^{\frac d2}\,2^s\,\Gamma\(\frac s2\)\Big/\Gamma\(\frac{d-s}2\),$$
we have (see, e.g., \cite{22a}, p.~117)
\begin{equation}\label{e1.7}
	(-\Delta)^{-\frac s2}f= I_s*f,\ \ff f\in L^1\cap L^2.
\end{equation}
As easily seen, hypothesis (iv) holds in this case if $0<s<\frac{d+4}2,$ $d>2$.

\bk\n{$\bf 2^\circ$.} {\bf Bessel kernels} 
\begin{equation}\label{e1.8}
	K(x)=\nabla G_\alpha(x), \ x\in\rrd\setminus\{0\},
\end{equation}
where $G_\alpha$ is the Bessel potential defined by (see \cite{1}, \cite{22a}, p. 132)
$$\calf(G_\alpha)(\xi)=-(1+4\pi^2|\xi|)^{-\frac\alpha2},\ \xi\in\rrd.$$
(Here, $\calf(G_\alpha)$ is the Fourier transform of $G_\alpha$.) In analogy with \eqref{e1.7}, we have
  
\begin{equation}\label{e1.9}
	(I-\Delta)^{-\frac \alpha2}f=G_\alpha*f,\ \nabla((I-\Delta)^{-\frac\alpha2}f)=K*f, 
\end{equation}
and
$$\barr{lcl}
G_\alpha(x)&=&-H_{\frac{\alpha-d}2}(|x|)|x|^{\frac{\alpha-d}2},\ 0<\alpha<d+1,\ x\in\rrd,\vsp 
H_\nu(r)&\equiv&\mu_\nu\,r^\nu\,e^{-\nu}\int^\9_0e^{-tr}\,
t^{\nu-\frac12}\(1+\frac12\,t\)^{\nu-\frac12}dt,\ \nu>\-\frac12,\ r>0.\earr$$Then, hypothesis (iv) hold if $0<\alpha<\frac d2$.

NFPE \eqref{e1.1} with kernels $K$ of the form \eqref{e1.6}, 
\eqref{e1.7}  arises   
in the models of chemotaxis phenomena.  Such an example is the generalized Keller--Segal model \cite{16}
\begin{equation}\label{e1.9a}
\barr{lcll}
\dd\frac{\pp u}{\pp t}-\Delta\beta(u)+\divv(u\nabla v)&=&0&\mbox{ in }(0,\9)\times\rr,\vsp
\hfill(-\Delta)^\alpha v&=&u&\mbox{in }(0,\9)\times\rrd,\vsp
\hspace*{-19mm}\mbox{respectively }\vsp 
\hfill(I-\Delta)^\alpha v&=&u&\mbox{where }0<\alpha\le1,\earr 
\end{equation}
which describes the chemotaxis dynamics of biological populations in pre\-sence of anomalous diffusion. In fact, the first equation describes the evolution of density of population and the second the evolution of chemotaxis agent. The same equation can be interpreted as a Cahn--Hilliard equation~\cite{11}. 

We shall prove here the existence and uniqueness of the solution $u:[0,\9)$ $\to H\1(\rr^+)$ to NFPE \eqref{e1.1} in the set $\calp$ of probability densities,  
\begin{equation}\label{e1.10}
\calp=\left\{u\in L^1(\rrd);\ u\ge0,\mbox{ a.e. in }\rrd,\int_\rrd u(x)dx=1\right\}.\end{equation}
Moreover, the flow $t\to u(t)=S(t)u_0$ is a continuous semigroup of quasi-contractions in the Sobolev space $H\1(\rrd)$ which is everywhere differentiable from the right on $[0,\9)$ on a dense subset of $\calp\cap L^1$. The proof is based on K${\rm\bar o}$mura's deep result regarding the differentiability of continuous non\-linear semigroups of contractions defined on a closed convex set of a Hilbert space. As shown in Theorem \ref{t4.1} in Section \ref{s4}, under additional hypothesis this solution is unique in the class of distributional weakly continuous in $t$ solutions to NFPE \eqref{e1.1}. Such a result was proved earlier for NFPE with Niemitsky type kernel \cite{7}, \cite{8} and in \cite{9} for equations with $D\equiv0,$ $\beta(r)\equiv r$ and the Biot--Savart kernel\newpage 
\begin{equation}\label{e1.11b}
K(x)\equiv\frac1\pi\, x^\bot|x|^{-2},\ x=(x_1,x_2),\ x^\bot=(-x_2,x_1),\end{equation}
which is equivalent with the vorticity equation for the $2-D$ Navier--Stokes equation. It should be mentioned, however, that this case is ruled out by hypothesis (iv) which, as seen in \cite{9}, need some specific arguments. 

A direct consequence of the uniqueness result is the existence and uniqueness of a strong solution $X$ to a \mkve\ \eqref{e1.3} (Section \ref{s5}).   

As regards the literature on NFPE \eqref{e1.1} under appropriate assumptions on $K$ and $D\equiv0$, the work \cite{18} of C. Olivera, A Richard and M. Tomasovic should be cited. In this context, the work \cite{17} should be also cited. 
We also note that, if $D=-\nabla\Phi$, $K=-\nabla W$, where $\Phi:\rrd\to\rrd$, $W:\rrd\to\rr$, then the \FP\ equation is associated with the entropy functional (see~\cite{26b})
\begin{equation}
\label{e1.16b}
\barr{ll}
E(u)\!\!\!&\equiv\dd\int_\rrd dx\left(\int^{u(x)}_1
\frac{\beta'(\tau)}{b(\tau)\tau}\,d\tau+\Phi(x)u(x)\right)\\
&+\dd\frac12\int_{\rrd\times\rrd}W(x-y)u(x)u(y)dy.\earr
\end{equation}

\bk \noindent{\bf Notations and definitions.} If $\calo\subset\rrd$ is a Lebesgue measurable set, we denote by $L^p(\calo)$, $1\le p\le\9$, the standard space of $p$-Lebesgue integrable functions on $\calo$ with the norm $\|\cdot\|_{L^p(\calo)}$. If $\calo=\rrd$, we shall simply write $L^p=L^p(\rrd)$ and $|\cdot|_p=\|\cdot\|_{L^p}$. By $C_b(\rrd)$ we shall denote the space of bounded and continuous functions on $\rrd$. Denote by  $H^1$ the Sobolev space $H^1(\rrd)=\left\{u\in L^2;\,\frac{\pp u}{\pp x_i}\in L^2,\,i=1,...,d\right\}$, with the standard norm $|\cdot|_{H^1}$, and by $H\1$ the dual of $H^1$. The space $H\1$ is endowed with the scalar product
$$\<u_1,u_2\>_{-1}=((I-\Delta)\1u_1,u_2)_2,$$where $(\cdot,\cdot)_2$ is the scalar product of the space $L^2$. We denote by $|\cdot|_{-1}$ the corresponding Hilbertian norm of $H\1$, that is,
$$|u|_{-1}=((I-\Delta)\1u,u)^{\frac12}_2,\ \ff u\in H\1.$$
For $0<T\le0$, we shall denote by $C([0,T];H\1$ the space of all $H\1$-valued continuous functions on $[0,T]$. $W^{1,p}([0,T];H\1)$, $1\le p\le\9$, is the space $\left\{u\in L^\9(0,T;H\1);\,\frac{du}{dt}\in L^p(0,T;H\1)\right\}$, where $\frac{du}{dt}$ is taken in the sense of $H\1$-valued vectorial distributions. It turns out (see, e.g., \cite{3}, \cite{4}) that each $u\in W^{1,p}([0,T];H\1)$ is absolutely continuous and a.e. $H\1$ differentiable on $(0,T)$. We denote by $\cald'(Q_T)$ the space of Schwartz distributions on $Q_T=(0,T)\times\rrd.$ 

The nonlinear operator $A:D(A)\subset H\1\to H\1$ is said to be {\it quasi-m-accretive} if there is $\oo>0$ such that, for all $\lbb\in\(0,\frac1\oo\)$,
\begin{equation}
\label{e1.11}
\barr{r}
\left|(I+\lbb A)\1f_1-(I+\lbb A)\1f_2\right|_{-1}\le(1-\lbb\oo)\1|f_1-f_2|_{-1},\vsp \ff f_1,f_2\in H\1.\earr
\end{equation}
Equivalently,
\begin{equation}
\label{e1.12}
\barr{l}
\<Au_1-Au_2,u_1-u_2\>_{-1}\ge-\oo|u_1-u_2|_{-1},\,\ff u_1,u_2\in D(A),\vsp
R(I+\lbb A)=H\1,\ \ff\lbb\in(0,\lbb_0),
\earr 
\end{equation}
for some $\lbb_0\in\(0,\frac1\oo\)$. (Here, $R(I+\lbb A)$ stands for the range of $I+\lbb A$.)

If $A$ is {\it quasi-m-accretive}, then it generates on $\ov{D(A)}$ (the closure of $D(A)$) a continuous semigroup of $\oo$-quasi-contractions $S(t)$, that is (see, e.g., \cite{3},~\cite{4}),
\begin{eqnarray}
&\dd\frac{d^+}{dt}\,S(t)u_0+AS(t)u_0=0,\ \ff t>0,\label{e1.13}\\[1mm]
&\dd\lim_{t\to0}S(t)u_0=u_0,\ S(t+s)u_0=S(t)S(s)u_0,\ \ff t,s\ge0,\label{e1.13b}
\end{eqnarray}
for all $u_0\in D(A)$. Also, the right derivative $\frac{d^+}{dt}\,S(t)u_0$ is continuous from the right on $[0,\9)$ 
and $\frac{d}{dt}\,S(t)u_0$ exists and is everywhere continuous except a countable set of $[0,\9)$. Moreover, for all  $u_0\in\ov{D(A)}$, $S(t)u_0$ is given by the ex\-po\-nen\-tial formula
\begin{equation}
\label{e1.14}
S(t)u_0=\dd\lim_{n\to\9}\(I+\frac tn\,A\)^{-n}u_0\ \ \mbox{ in }H\1
\end{equation}
 uniformly on compact intervals $[0,T]\subset[0,\9)$  and $S(t)$ is $\oo$-quasi-contractive, that is,
\begin{equation}
\label{e1.15}
|S(t)u_0-S(t)v_0|_{-1}\le\exp(\oo t)|u_0-v_0|_{-1},\ 
\ff u_0,v_0\in\ov{D(A)}, \ t\ge0.
\end{equation}
This means that $u(t)=S(t)u_0$ is for each $u_0\in D(A)$ a {\it smooth} ({\it differentiable}) {\it solution} (in the space $H\1$) to the Cauchy problem
\begin{equation}
\label{e1.16}
\frac{du}{dt}+Au=0,\ t\ge0;\ u(0)=u_0,
\end{equation}
while for $u_0\in\ov{D(A)}\setminus D(A),$ the function $u=u(t)$ given by \eqref{e1.14} is a {\it mild solution} to \eqref{e1.16} and it is equivalently defined as the limit for $h\to0$ of the solution $u_h$ to the finite difference scheme
\begin{equation}
\label{e1.18a}\barr{l}
\dd\frac1h\,(u_h(t)-u_h(t-h))+Au_h(t)=0,\ \ff t>0,\vsp
u_h(t)=u_0,\ \ff t<0.\earr\end{equation}

\section{The well-posedness of NFPE \eqref{e1.1}}\label{s2}
\setcounter{equation}{0}

Let $u_0\in L^2$. The function $u:[0,\9)\to L^2$ is said to be a {\it weak solution} to \eqref{e1.1} if
\begin{eqnarray}
&u\in C([0,\9);H\1),\ \dd\frac{du}{dt}\in L^2(0,T;H\1),\ \ff T>0,\label{e2.1}\\[1mm]
&\beta(u)\in L^2(0,T;H^1),\ Db(u)+(K*u)u\in L^2(0,T;L^2),\label{e2.2}\\[2mm]
&\barr{ll}
\raise-3mm\hbox{${}_{H\1}$}\!\!\left(\dd\frac{du}{dt}\,(t),\vf\right)_{\!\!H^1}\!\!\!&+\dd\int_\rrd(\nabla_x\beta(u(t,x))-(Db(u(t,x))\vsp
&+K*u)u(x)){\cdot}\nabla\vf(x)dx=0,\ 
\ff\vf\in H^1,\vsp
&\hfill\mbox{ a.e. }t>0,\mbox{ a.e. }t>0,\earr\label{e2.3}\\[2mm]
&u(0,x)=u_0(x),\ x\in\rrd.\label{e2.4}\end{eqnarray}
We have

\begin{theorem}
	\label{t2.1} Let $u_0\in \calp\cap L^\9$. Then, under hypotheses {\rm(i)--(iv)} there is a unique weak solution $u^*=u(t,u_0)$ to \eqref{e1.1} which satisfies  
	\begin{eqnarray}
	&u^*(t)\in \calp,\ \ff t\ge0,\label{e2.5}\\[1mm]
	&u^*\in L^2(0,T;H^1)\cap L^\9((0,T)\times\rrd),\ \ff T>0.\label{e2.6}
	\end{eqnarray} 
	Moreover, we have
	\begin{eqnarray} 
	&|u(t,u_0)-u(t,v_0)|_{-1}\le\exp(\oo t)|u_0-v_0|_{-1},\ \ff u_0,v_0\in\calp\cap L^\9,\ t\ge0,\ \ \ \ \label{e2.7}\\[1mm]
	&0\le u(t,x)\le\exp(\gamma t)|u_0|_\9,\ \ff(t,x)\in(0,\9)\times\rrd,\ u_0\in\calp\cap L^\9,\ \ \ \label{e2.8}\end{eqnarray}
	for some $\oo,\gamma>0$.
	
	Moreover, the function $u:[0,\9)\to L^1$ is narrowly continuous, that is,
	\begin{equation}
	\label{e2.8a}\lim_{t\to t_0}\int_\rrd u(t,x)\vf(x)dx
	=\int_\rrd u(t_0,x)\vf(x)dx,\ \ff\vf\in C_b(\rrd),\ff t_0\ge0.
	\end{equation}		
\end{theorem}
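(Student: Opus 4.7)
The natural strategy, consistent with the semigroup framework recalled in the introduction, is to view \eqref{e1.1} as an abstract Cauchy problem $\frac{du}{dt}+Au=0$ in the Hilbert space $H\1$ and to apply the Crandall--Liggett generation theorem to the operator
$$Au := -\Delta\beta(u) + \divv\bigl((D\,b(u) + K*u)\,u\bigr),$$
defined on a suitable domain $D(A)\subset(\calp\cap L^\9)\cap H\1$. The plan is to show that $A$ is $\oo$-quasi-m-accretive in $H\1$ in the sense of \eqref{e1.11}--\eqref{e1.12}. Once this is established, $u^*(t)=S(t)u_0$ is taken as the semigroup solution generated by $A$ on $\ov{D(A)}$ via \eqref{e1.13}--\eqref{e1.15}, and the properties \eqref{e2.5}--\eqref{e2.8a} are checked first on the resolvents $u_\lambda=(I+\lambda A)\1 u_0$ and then passed to the limit through the exponential formula \eqref{e1.14}.

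The core step is quasi-accretivity. For $u_1,u_2\in D(A)$, I pair $(u_1-u_2)+\lambda(Au_1-Au_2)=f_1-f_2$ with $(I-\Delta)\1(u_1-u_2)$, the $H\1$-duality element. The diffusion part $-\Delta(\beta(u_1)-\beta(u_2))$, via the identity $(I-\Delta)\1(-\Delta v)=v-(I-\Delta)\1 v$ and hypothesis~(i), contributes a coercivity term $\alpha|u_1-u_2|_2^2$, modulo a remainder absorbable by $C|u_1-u_2|_{-1}^2$ once the a priori $L^\9$-bound makes $\beta$ Lipschitz on the range of the $u_i$. The Niemitsky drift $Db(u)$ is handled as in \cite{5}--\cite{7} under hypotheses (ii)--(iii). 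The convolution term is split via
$$(K*u_1)u_1-(K*u_2)u_2=(K*(u_1-u_2))\,u_1+(K*u_2)(u_1-u_2),$$
and $|K*v|_\9$ is controlled through the decomposition $K=K\mathbf{1}_{B_1}+K\mathbf{1}_{B^c_1}$ afforded by~(iv), via Young's inequality: $|K*v|_\9\le|K|_{L^1(B_1)}|v|_\9+|K|_{L^2(B^c_1)}|v|_2$. The resulting cross terms are absorbed into $\tfrac{\alpha}{2}|u_1-u_2|_2^2+\oo|u_1-u_2|_{-1}^2$, which yields \eqref{e1.12}. The range condition $R(I+\lambda A)=H\1$ for small $\lambda$ is handled by a Banach fixed-point argument: freeze $\tilde u$ in the convective term, solve the resulting maximal monotone equation $u-\lambda\Delta\beta(u)+\lambda\divv((Db(u)+K*\tilde u)u)=f$ by standard monotone operator theory in $H\1$, then iterate in $\tilde u$ using the Lipschitz bound on $\tilde u\mapsto K*\tilde u$ from~(iv).

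Once $S(t)$ has been generated, the qualitative properties follow from the resolvent identity $u_\lambda+\lambda Au_\lambda=u_0$. Mass conservation $\int u^*(t)\,dx=1$ is automatic because $Au$ is in divergence form; integration against cutoffs of the constant function $1$ and passage to the limit give $\int u_\lambda=\int u_0$. Nonnegativity is obtained by pairing the resolvent equation with $-(I-\Delta)\1 u_\lambda^-$ and using $(\divv D)^-=0$ together with $(\divv K)^-\in L^\9$ and $\beta'\ge\alpha$ to absorb the drift contributions, giving $|u_\lambda^-|_{-1}\le(1-\lambda\oo)\1|u_0^-|_{-1}=0$. The $L^\9$-bound \eqref{e2.8} comes from a Stampacchia truncation $(u_\lambda-M)^+$ in the resolvent equation, producing an iteration with exponential rate $\gamma$ depending on $\|b\|_{C^1}$, $|\divv D|_\9$ and $|\divv K|_\9$. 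The bound \eqref{e2.6} on $u^*\in L^2(0,T;H^1)$ follows by testing the weak form \eqref{e2.3} against $\beta(u^*)$ and using $\beta'\ge\alpha$. Uniqueness in the class \eqref{e2.1}--\eqref{e2.4} is an immediate consequence of \eqref{e2.7}. Finally, narrow continuity \eqref{e2.8a} follows from $H\1$-continuity of $t\mapsto u^*(t)$ together with the uniform $L^1\cap L^\9$-bounds, by approximating $\vf\in C_b(\rrd)$ with compactly supported $H^1$-functions and controlling the tails via a tightness argument based on mass conservation.

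The main obstacle is verifying quasi-accretivity in the presence of the singular convolution: the splitting of $K$ according to~(iv) must be coordinated precisely with the coercivity $\alpha|u_1-u_2|_2^2$ extracted from the diffusion term, and the Banach fixed-point argument in the range condition must preserve the invariant set $\calp\cap L^\9$ (otherwise the Lipschitz property of $\beta$, and hence the whole accretivity estimate, cannot be maintained along the resolvent iteration). This forces the domain $D(A)$ and the iteration space to be chosen carefully before \eqref{e1.12} can be closed.
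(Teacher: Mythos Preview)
Your strategy---show $A$ directly quasi-$m$-accretive in $H\1$ and invoke Crandall--Liggett---differs substantively from the paper's, which does \emph{not} establish this for $A$ itself. Instead the paper introduces a regularized operator $A_\vp$ in which $\beta$ is replaced by a globally Lipschitz $\beta_\vp$, $b$ by a bounded $b_\vp$, $K$ by a cut-off $K_\vp\in C^1\cap L^\9$, and the convolution argument by a truncation $\vf_\vp$. This $A_\vp$ is quasi-$m$-accretive on \emph{all} of $H\1$ (with an $\vp$-dependent constant), and the range condition is obtained by rewriting the resolvent equation as $G_\vp(u)=(I-\Delta)\1f$ with $G_\vp$ monotone and coercive on $L^2$, then invoking Minty's surjectivity theorem---no fixed-point iteration is used. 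One generates semigroups $S_\vp(t)$, proves positivity, $\calp$-invariance, the $L^\9$ bound, and energy estimates uniformly in $\vp$, and then passes to the limit $\vp\to0$ by weak compactness in $L^2_{\rm loc}(0,\9;H^1)$ together with strong $L^2$ convergence. Only \emph{after} constructing $u^*$ does the paper prove the stability estimate \eqref{e2.7}, by your accretivity-style calculation applied directly to the limit equation; thus the estimate you propose for generation is used in the paper a posteriori for uniqueness.

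Your direct route has two genuine gaps. First, the range condition: freezing $\tilde u$ only in the convolution still leaves the equation
$u-\lbb\Delta\beta(u)+\lbb\,\divv\bigl(Db(u)u+(K*\tilde u)u\bigr)=f$,
whose remaining Nemytskii nonlinearities $\beta$ and $r\mapsto b(r)r$ are neither globally Lipschitz nor bounded under (i)--(iii), so ``standard monotone operator theory in $H\1$'' does not apply without further truncation---which is precisely what $\beta_\vp$, $b_\vp$, $\vf_\vp$ supply. Second (and you flag this yourself), the circularity between the $L^\9$ bound and the accretivity constant: your $\oo$ depends on $|u_i|_\9$, but that bound is produced by the resolvent, which presupposes a fixed $\oo$. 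One can in principle thread this by tracking the growth $(1-\lbb\gamma)^{-n}\to e^{\gamma t}$ through the discrete scheme and applying accretivity on the resulting bounded set, but this is a nonstandard variant of Crandall--Liggett that you have not carried out. A smaller point: testing the resolvent equation against $-(I-\Delta)\1u_\lbb^-$ in $H\1$ does not yield nonnegativity as written; the paper instead tests in $L^2$ against the smoothed sign $\calx_\delta(u^-)$ and lets $\delta\to0$, which is where the conditions $(\divv\,D)^-=0$ and \eqref{e1.2} actually enter.
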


We set
\begin{equation}\label{e2.9}
S(t)u_0=u(t,u_0),\ t\ge0,\ u_0\in\calp\cap L^\9,\end{equation}
and note that $S(t)(\calp\cap L^\9)\subset\calp\cap L^\9,$ $\ff t\ge0,$ the function $t\to S(t)u_0$ is continuous (in $H\1$) on $[0,\9)$ and by the uniqueness part in Theorem \ref{t2.1} satisfies the semigroup property 
\begin{equation*} 
S(t+s)u_0=S(t)S(s)u_0,\ \ff t,s\ge0,\ u_0\in\calp\cap L^\9,\end{equation*}
and by \eqref{e2.7}
\begin{equation*}
|S(t)u_0-S(t)v_0|_{-1}\le\exp(\oo t)|u_0-v_0|_{-1},\ \ff t\ge0,\ u_,v_0\in\calp\cap L^\9.\end{equation*}

In other words (see \eqref{e1.13b}, \eqref{e1.15}), $S(t)$ {\it is a continuous semigroup of $\oo$-contractions on $\calp\cap L^\9\subset H\1$} which extends by density to a continuous $\oo$-contractive semigroup on the closure $\calk$ of $\calp\cap L^\9$ in $H\1$-norm. Then, by K$\rm\bar o$mura's theorem (see \cite{2}, p.~175) there is a quasi-$m$-accretive operator $A_0$ with the domain $D(A_0)$ dense in $\calk$, which generates the semigroup $S(t)$ on $\calk$, that~is,
\begin{equation}\label{e2.10} 
\frac{d^+}{dt}\,S(t)u_0+A_0 S(t)u_0=0,\ \ff t\ge0,\ u_0\in D(A_0),\end{equation}
and the function $t\to\frac d{dt}\,S(t)$ exists and is everywhere continuous on $(0,\9)$ except a countable set  of $t$. 

We set $\calk=\ov{\calp\cap L^\9}$, i.e., the closure of  $\calp\cap L^\9$ in $H\1$. (As easily seen, we have $\calk=\calp_0\cap H\1$, where $\calp_0$ is the set of all probability measures on  $\rrd$.) Therefore, we have

\begin{corollary}\label{c2.2} The map $u_0\to S(t)u_0$, defined by \eqref{e2.9}, extends to a continuous semigroup of quasi-contractions in $H\1$ on the set $\calk$. Moreover, $S(t)\calk\subset\calk$, $\ff t\ge0$, and $t\to S(t)u_0$ is everywhere differentiable from the right $($in $H\1$-norm$)$ for all $u_0$ in a dense subset $D(A_0)$ of $\calk$. 
\end{corollary}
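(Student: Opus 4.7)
The plan is to obtain Corollary \ref{c2.2} essentially as a density-plus-K\={o}mura argument, using Theorem \ref{t2.1} as the sole input. First I would check that $\calp\cap L^\9$ is convex in $H\1$: it is the intersection of $L^\9$ (convex) with the convex set $\calp$ of probability densities, so its $H\1$-closure $\calk$ is a closed convex subset of the Hilbert space $H\1$. This convexity is the structural hypothesis one needs to apply K\={o}mura's theorem later.

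Next I would extend $S(t)$ from $\calp\cap L^\9$ to $\calk$ by uniform continuity. Estimate \eqref{e2.7} says that, for each fixed $t\ge0$, the map $S(t)$ is Lipschitz from $(\calp\cap L^\9,|\cdot|_{-1})$ into itself with constant $e^{\oo t}$. Since $H\1$ is complete and $\calp\cap L^\9$ is dense in $\calk$ by definition, $S(t)$ extends uniquely to a Lipschitz continuous map $\wt S(t):\calk\to H\1$ with the same constant, and the extension takes values in the closed set $\calk$, i.e.\ $\wt S(t)\calk\subset\calk$. The semigroup identity $S(t+s)u_0=S(t)S(s)u_0$, valid on $\calp\cap L^\9$ by Theorem \ref{t2.1}, transfers to $\calk$ by approximating $u_0\in\calk$ by a sequence $u_0^n\in\calp\cap L^\9$ and passing to the limit in both sides using the $e^{\oo t}$-Lipschitz bound. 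Continuity of $t\mapsto\wt S(t)u_0$ on $[0,\9)$ for each $u_0\in\calk$ follows by a standard $\varepsilon/3$ argument: $|\wt S(t)u_0-\wt S(s)u_0|_{-1}$ is controlled by $e^{\oo\max(t,s)}|u_0-u_0^n|_{-1}$ (twice) plus the continuous in $t$ quantity $|S(t)u_0^n-S(s)u_0^n|_{-1}$ provided by \eqref{e2.1} for the approximants.

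At this point $\{\wt S(t)\}_{t\ge0}$ is a continuous semigroup of $\oo$-quasi-contractions on the closed convex set $\calk$ of the Hilbert space $H\1$. K\={o}mura's theorem (\cite{2}, p.~175) applied in this setting yields a quasi-$m$-accretive operator $A_0:D(A_0)\subset\calk\to H\1$ whose domain is dense in $\calk$ and which generates $\wt S(t)$ in the sense \eqref{e2.10}; in particular, $t\mapsto\wt S(t)u_0$ is right-differentiable at every $t\ge0$ for each $u_0\in D(A_0)$, and $\frac{d}{dt}\wt S(t)u_0$ exists outside a countable set of $t$. Dropping the tilde gives the statement.

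The only real obstacle I anticipate is bookkeeping around the density argument and the verification that K\={o}mura's hypotheses apply: one must insist that $\calk$ be convex (not merely closed) in $H\1$, which is why the explicit identification $\calk=\calp_0\cap H\1$ and convexity of $\calp\cap L^\9$ are recorded first. Beyond that, the content is purely abstract semigroup theory and everything reduces to invoking Theorem \ref{t2.1} and the cited result of K\={o}mura.
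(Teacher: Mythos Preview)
Your proposal is correct and follows essentially the same approach as the paper: the argument there (given in the paragraph immediately preceding the corollary) is precisely to observe that Theorem~\ref{t2.1} yields a continuous $\oo$-quasi-contractive semigroup on $\calp\cap L^\9$, extend it by density to $\calk$, and then invoke K\={o}mura's theorem to obtain the generator $A_0$ with dense domain and right-differentiability. If anything, you are more explicit than the paper about verifying the convexity of $\calk$ and about the $\varepsilon/3$ argument for continuity of the extension; the paper simply asserts the extension ``by density'' and cites \cite{2}, p.~175.
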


Taking into account \eqref{e2.3} and  \eqref{e2.10}, we may conclude that
$$A_0u_0=\Delta\beta(u_0)+{\rm div}(Db(u_0)u_0+(K*u_0)u_0),\ \ff u_0\in \calp\cap L^\9\subset D(A_0).$$
For $u_0\in\calk\setminus D(A_0)$, $u^*(t)=S(t)u_0$ is a {\it mild solution} in the sense of \eqref{e1.16}~to 
\begin{equation}\label{e2.11}
\frac{du^*}{dt}+A_0u^*=0,\ \ u^*(0)=u_0.
\end{equation}
We may view the semigroup $S(t)$ as the {\it nonlinear flow} corresponding to NFPE \eqref{e1.1}. As seen above, it leaves invariant $\calk$ and is differentiable on a dense subset of $\calk$.

\section{Proof of Theorem \ref{t2.1}}\label{s3}
\setcounter{equation}{0}

We set, for $\vp>0$,
$$\barr{c}
\beta_\vp(r)\equiv\vp r+\beta((1+\vp\beta)\1r),\ \ff r\in\rr,\vsp
\vf_\vp(r)=\dd\frac1\vp\,\frac r{|r|}\mbox{ if }|r|\ge\frac1\vp,\ \vf_\vp(r)=r\mbox{ if }|r|\le\frac1\vp,\vsp
K_\vp(x)\equiv\eta\(\dd\frac{|x|}\vp\) 
K(x),\ b_\vp(r)\equiv (1-\eta(\vp r))b(r), \earr$$
where $\eta\in C^1([0,\9))$ is a given cut-off function such that $0\le\eta'(r)\le1$, $0\le\eta(r)\le1$, $\ff r\ge0$, and
$$\eta(r)=0,\ \ff r\in[0,1];\ \eta(r)=1,\ \ff r\ge2.$$
We note that by hypothesis (iv) it follows that $b_\vp\in C_b(\rr)\cap C^1_b(\rr)$, $\ff\vp>0$, and
\begin{eqnarray}
&K_\vp\in C^1(\rrd;\rrd),\ \divv\ K_\vp\in L^\9,\ K_\vp\in L^\9(\rrd),\label{e3.1}\\[1mm]
&\dd\lim_{\vp\to0}K_\vp(x)=K(x),\ \ff x\in\rrd. \label{e3.2}
\end{eqnarray}We also set 	$j(r)\equiv\int^r_0\beta(s)ds$ and $j_\vp(r)\equiv\int^t_0\beta_\vp(s)$ and note that $$j_\vp(r)\le j(r),\ \ff r\in\rr.$$
Now, we define the operator $A_\vp:D(A_\vp)\subset H\1\to H\1$,
\begin{equation}\label{e3.3}
\barr{r}
A_\vp(u)=-\Delta\beta_\vp(u)+\divv((D b_\vp(u)u+(K_\vp*\vf_\vp(u))\vf_\vp(u)),\vsp \ff u\in D(A_\vp)=H^1.\earr\end{equation}	
We have

\begin{lemma}\label{l3.1} $A_\vp$ is quasi-m-accretive in $H\1$ and for   $\lbb\in(0,\lbb_\vp)$  
\begin{eqnarray}
&(I+\lbb A_\vp)\1f\ge0,\mbox{ a.e. in }\rrd\mbox{ if }f\ge0,\label{e3.4}\\[1mm]
&(I+\lbb A_\vp)\1(\calp\cap H\1)\subset\calp,\label{e3.5}
\end{eqnarray}Moreover, we have
\begin{eqnarray} 
&0\le (I+\lbb A_\vp)\1 f\le N,\mbox{\ \ a.e. in }\rrd,\label{e3.6}
\end{eqnarray}	
for all $N>0$ and $f\in \calp\cap L^\9$ such that, for $0<\lbb<\frac1\gamma$,\newpage
\begin{eqnarray}
&0\le f\le(1-\lbb\gamma)N,\mbox{ a.e. in }\rrd,\label{e3.7}\\[1mm]
&\gamma=|(\divv\ K)^-|_\9+
\left\|\dd\frac{(K(x)\cdot x)^-}{|x|}\right\|_{L^\9(B_1)}.\label{e3.7a}
\end{eqnarray}	
\end{lemma}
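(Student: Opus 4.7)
The plan is to exploit the fact that the regularization built into $A_\vp$ renders every nonlinearity bounded and globally Lipschitz, while the principal part remains strictly monotone thanks to $\beta_\vp'\geq\vp$. Monotone operator theory in $H\1$ then applies directly.

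\textbf{Quasi-accretivity and range.} For $u_1,u_2\in H^1=D(A_\vp)$ set $w=u_1-u_2$. Using the identities $(I-\Delta)\1(-\Delta\vf)=\vf-(I-\Delta)\1\vf$ and $(I-\Delta)\1\divv F=\divv(I-\Delta)\1 F$, one computes
$$\<A_\vp u_1-A_\vp u_2,w\>_{-1}=\int(\beta_\vp(u_1)-\beta_\vp(u_2))w\,dx-\<\beta_\vp(u_1)-\beta_\vp(u_2),w\>_{-1}-((I-\Delta)\1(G(u_1)-G(u_2)),\nabla w)_2,$$
where $G(u)=Db_\vp(u)u+(K_\vp*\vf_\vp(u))\vf_\vp(u)$. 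The first integral is $\geq\vp|w|_2^2$ by $\beta_\vp'\geq\vp$, and the remaining two are bounded by $C_\vp|w|_2|w|_{-1}$ via the $L^2$-Lipschitz bounds on $\beta_\vp$ and $G$ (which use \eqref{e3.1}, the boundedness of $\vf_\vp,b_\vp$, and Young's convolution inequality). A Young inequality absorbs the cross term and yields \eqref{e1.12} with some $\omega_\vp>0$. For the range, freeze the drift: define $T\colon L^2\to L^2$ by $u=Tv$ the unique $H^1$-solution of the linear-in-$u$ problem $u-\lambda\Delta\beta_\vp(u)=f-\lambda\divv G(v)$ (existence by Browder's theorem, the left-hand side being strongly monotone and coercive from $H^1$ to $H\1$). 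The $L^2$-Lipschitz dependence of $v\mapsto G(v)$ makes $T$ a strict contraction on $L^2$ for $\lambda$ small; its fixed point is the sought resolvent.

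\textbf{Positivity and invariance of $\calp$.} For $f\geq0$, test $u+\lambda A_\vp u=f$ against $-u^-\in H^1$:
$$|u^-|_2^2+\lambda\int\beta_\vp'(u)|\nabla u^-|^2\,dx\leq\lambda\int_{\{u<0\}}G(u)\cdot\nabla u\,dx.$$
Writing $b_\vp(u)u\,\nabla u=\nabla F_\vp(u)$ with $F_\vp(r)=\int_0^r b_\vp(s)s\,ds$ and $\vf_\vp(u)\nabla u=\nabla\Phi_\vp(u)$ with $\Phi_\vp(r)=\int_0^r\vf_\vp(s)\,ds$, integration by parts converts the right-hand side into $-\lambda\int(\divv D)F_\vp^-(u)\,dx-\lambda\int((\divv K_\vp)*\vf_\vp(u))\Phi_\vp^-(u)\,dx$. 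The first term is $\leq0$ since $(\divv D)^-=0$ and $F_\vp^-\geq0$; the second is bounded by $C_\vp|u^-|_2^2$ via Young's convolution inequality and \eqref{e3.1}. For $\lambda$ small, $u^-\equiv0$, which is \eqref{e3.4}. Mass conservation, and hence \eqref{e3.5}, then follows by pairing the resolvent equation against $\xi_R\in C^\9_c(\rrd)$ with $\xi_R\uparrow1$ and sending $R\to\9$, using $u\in L^1$ together with the integrability of the flux.

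\textbf{$L^\9$ bound.} Under \eqref{e3.7}, set $v=u-N$ and test with $v^+$:
$$|v^+|_2^2+\lambda\vp|\nabla v^+|_2^2+\lambda\int_{\{u>N\}}G(u)\cdot\nabla u\,dx=\int(f-N)v^+\,dx.$$
Integrating the drift term by parts as above but on $\{u>N\}$, and using the decomposition $\divv K_\vp=\eta(|x|/\vp)\divv K+\vp\1\eta'(|x|/\vp)K(x)\cdot x/|x|$, yields a lower bound for the drift integral of the form $-\lambda\gamma N\int v^+\,dx$ with $\gamma$ exactly as in \eqref{e3.7a}: the summand $|(\divv K)^-|_\9$ controls the interior term $\eta\divv K$, while $\|(K(x)\cdot x)^-/|x|\|_{L^\9(B_1)}$ controls the annular contribution of $\eta'$. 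Combined with $f-N\leq-\lambda\gamma N$, the right-hand side becomes $\leq0$, forcing $v^+=0$ and hence $u\leq N$. The main obstacle is precisely this uniform-in-$\vp$ control: the term $\vp\1\eta'(|x|/\vp)K(x)\cdot x/|x|$ is pointwise $O(\vp\1)$ but supported on the annulus $\{\vp\leq|x|\leq2\vp\}$ of volume $O(\vp^d)$, and one must show that, after convolution with $\vf_\vp(u)$ (bounded in $L^1$ by $|u|_1\leq1$ from the previous step) and pairing with $\Phi_\vp^{>N}(u)\leq\vf_\vp(u)v^+$, its contribution is bounded $\vp$-uniformly by the second summand of $\gamma$. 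A change to polar coordinates $x=r\sigma$ on the annulus absorbs $\vp\1$ against the volume element and exhibits the explicit bound $\|(K(x)\cdot x)^-/|x|\|_{L^\9(B_1)}$.
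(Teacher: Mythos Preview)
Your overall strategy—monotone operator theory for quasi-$m$-accretivity, then sign/truncation tests for \eqref{e3.4}--\eqref{e3.6}—matches the paper's, but two of your sub-arguments differ and one of them does not close as written.

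\textbf{Range condition.} You freeze the drift and run a Banach fixed point; the paper instead applies Minty directly to the full map $G_\vp(u)=(I-\Delta)\1 u+\lbb\beta_\vp(u)-\lbb(I-\Delta)\1\beta_\vp(u)+\lbb(I-\Delta)\1\divv G(u)$ on $L^2$, showing it is monotone and coercive. Both routes work and use the same Lipschitz bounds.

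\textbf{Positivity and $L^1$.} The paper does \emph{not} integrate the drift by parts. It multiplies by the smoothed sign $\calx_\delta(u_\vp^-)$ (see \eqref{e3.12}); the drift then contributes only $\lbb\int_{[|u_\vp^-|\le\delta]}|G(u_\vp)|\,|\nabla u_\vp^-|\,dx$, which tends to $0$ with $\delta$ because $|G(u_\vp)|,|\nabla u_\vp^-|\in L^2$. No structural sign information on $D$ or $K$ is used at this step. Your route via the primitives $F_\vp,\Phi_\vp$ can be made to work, but the claimed bound ``$C_\vp|u^-|_2^2$'' on the $K$-term needs $((\divv K_\vp)*\vf_\vp(u))\in L^\9$ with a solution-independent constant; a~priori $u\notin L^1$, so this requires an extra argument that the paper's $\calx_\delta$-device avoids.

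\textbf{$L^\9$ bound: a genuine gap.} Integrating the $K$-drift by parts on $\{u>N\}$ gives
\[
-\int\big((\divv K_\vp)*\vf_\vp(u)\big)\,\Phi_\vp^N(u)\,dx,\qquad \Phi_\vp^N(u)=\int_N^{u}\vf_\vp(s)\,ds.
\]
Even granting the pointwise annular estimate $((\divv K_\vp)^-)*\vf_\vp(u)\le\gamma$, you obtain only $\gamma\int\Phi_\vp^N(u)\,dx$, and $\Phi_\vp^N(u)$ is \emph{not} dominated by $N v^+$: for $N<u\le\tfrac1\vp$ one has $\Phi_\vp^N(u)=\tfrac12(u+N)\,v^+>N v^+$. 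So the inequality does not close with the constant $\gamma$ of \eqref{e3.7a}. The paper's device is an algebraic rearrangement \emph{before} testing:
\[
\divv\big((K_\vp*\vf_\vp(u))\vf_\vp(u)\big)=\divv\big((K_\vp*\vf_\vp(u))(\vf_\vp(u)-\vf_\vp(N))\big)+\vf_\vp(N)\,\divv(K_\vp*\vf_\vp(u)).
\]
The second piece is a pointwise source bounded (via $\vf_\vp(N)\le N$, $|u|_1\le1$, and the decomposition of $\divv K_\vp$) by $N\gamma$; combined with $f-N\le-\lbb\gamma N$ and $(\divv D)^-=0$, the whole right-hand side of the rewritten equation is $\le0$. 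The first piece, after multiplying by $\calx_\delta((u-N)^+)$, is supported on $\{0\le(u-N)^+\le\delta\}$ and vanishes as $\delta\to0$ because $|\vf_\vp(u)-\vf_\vp(N)|\le|u-N|$. Isolating the constant $\vf_\vp(N)$ is exactly what produces the factor $N$ (rather than $\Phi_\vp^N(u)$) in front of $\gamma$; without this step your inequality does not close.
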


\begin{proof} We set $b^*_\vp(r)=b_\vp(r)r$, $b^*(r)=b(r)r,$ $\ff r\in\rr$, and note that
\begin{equation}\label{e3.8}
b^*_\vp,(b^*_\vp)'\in L^\9(\rr),\ \ff\vp>0.\end{equation}	

\n We have
$$\barr{l}
\<A_\vp(u)-A_\vp(v),u-v\>_{-1}\vsp\qquad
=(\beta_\vp(u)-\beta_\vp(v),u-v)_2
-\<\beta_\vp(u)-\beta_\vp(v),u-v\>_{-1}\vsp\qquad
+\<\divv(D(b^*_\vp(u)-b^*_\vp(v))),u-v\>_{-1}
+\<\divv(K_\vp*\vf_\vp(u-v)),u-v\>_{-1}\vsp\qquad
+\<\divv((K_\vp*\vf_\vp(v))(\vf_\vp(u)-\vf_\vp(v))),u-v\>_{-1}
\ge(\alpha+\vp)|u-v|^2_2\vsp\qquad
-|D(b^*(u)-b^*(v))|_2|u-v|_{-1}
-|K_\vp*\vf_\vp(u-v)|_2|u-v|_{-1}\vsp\qquad
+|K_\vp*\vf_\vp(v)|_2|u-v|_{-1}
\ge(\alpha+\vp)|u-v|^2_2-(|D|_\9|(b^*_\vp)'|_\9\vsp\qquad
+\dd\frac2{\vp^2}\,|K_\vp|_1)|u-v|_2|u-v|_{-1}
=(\alpha+\vp)|u-v|^2_2-C_\vp|u-v|_2|u-v|_{-1},\\\hfill \ff u,v\in H\1.
\earr$$
Hence, $A_\vp$ is $C_\vp$-quasi-dissipative, i.e.,
$$\barr{r}
|(I+\lbb A_\vp)\1 f_1-(I+\lbb A_\vp)\1f_2|_{-1}
\le(1-\lbb\gamma_\vp)\1|f_1-f_2|_{-1},\vsp \ff\lbb\in\(0,\frac1{\gamma_\vp}\),\ f_1,f_2\in\rr(I+\lbb A_\vp),\earr$$
$\gamma_\vp=(4(\alpha+\vp))\1C^2_\vp$. It remains to prove that, for some $\lbb_\vp>0,$ 
$$R(I+\lbb A_\vp)=H^1,\ \ff\lbb\in(0,\lbb_\vp).$$
To this end, we fix $f\in H\1$ and consider the equation $u+\lbb A_\vp(u)=f$, that~is,
\begin{equation}\label{e3.9}
\barr{r}
u-\lbb\Delta\beta_\vp(u)+\lbb\,\divv(D b^*_\vp(u)+(K_\vp*\vf_\vp(u))\vf_\vp(u))=f\vsp
\mbox{ in }\cald'((0,\9)\times\rrd).   \earr\end{equation} 
Equivalently,
\begin{equation}\label{e3.10}
\barr{r}
(I{-}\Delta)\1u{+}\lbb\beta_\vp(u){-}\lbb(I{-}\Delta)\1\beta_\vp(u)
+\lbb(I-\Delta)\1(\divv(Db^*_\vp(u)\vsp\quad
+(K_\vp*\vf_\vp(u))\vf_\vp(u)))=(I-\Delta)\1f.\earr
\end{equation}	
We rewrite \eqref{e3.10} as $G_\vp(u)=(I-\Delta)\1f$, where
$$\barr{ll}
G_\vp(u)\!\!\!&=(I-\Delta)\1u+\lbb\beta_\vp(u)-\lbb(I-\Delta)\1\beta_\vp(u)\vsp 
&+\lbb(I-\Delta)\1(\divv(D_\vp b^*_\vp(u)+(K_\vp*\vf_\vp(u))\vf_\vp(u))),\ \ff u\in L^2,\earr$$
and note that $G_\vp:L^2\to L^2$ is Lipschitz and, as seen above,
$$\barr{l}
(G_\vp(u)-G_\vp(v),u-v)_2\vsp\qquad
\ge|u-v|^2_{-1}+\lbb(\vp+\alpha)|u-v|^2_2-C_\vp\lbb|u-v|_2|u-v|_{-1}\vsp\qquad
\ge(1-(4(\vp+\lbb))\1\lbb C^2_\vp)|u-v|^2,\ \ff u,v\in L^2.\earr$$
Hence, for $\lbb\in(0,\lbb_\vp=4(\vp+\alpha)C^{-2}_\vp),$  the operator $G_\vp$ is monotone and coercive in $L^2$ and, therefore, by the Minty theorem it is surjective, that is, \mbox{$R(G_\vp)=L^2$.} Therefore, equation  \eqref{e3.10} (equivalently, \eqref{e3.9}) has a solution $u_\vp\in D(A_\vp)=H^1$ for each $f\in H\1$, as claimed.
 
Assume now that $f\in L^2$, $f\ge0$, a.e. on $\rrd$ and set $u_\vp=(I+\lbb A_\vp)\1f$, that is, $u_\vp$ is the solution to the equation
\begin{equation}\label{e3.11}
u_\vp-\lbb\Delta\beta_\vp(u_\vp)+\lbb\,\divv(D_\vp b_\vp(u_\vp)u_\vp+(K_\vp*\vf_\vp(u_\vp))\vf_\vp(u_\vp))=f.\end{equation}	
If we multiply \eqref{e3.11} by $\calx_\delta(u^-_\vp)$, where for $\delta>0$,
\begin{equation}\label{e3.12}
\calx_\delta(r)=\left\{
\barr{rcl}
1&\mbox{ if }&r>\delta,\\
\dd\frac1\delta\,r&\mbox{ if }&-\delta\le r\le\delta,\\
-1&\mbox{ if }&r<-\delta,\earr\right.
\end{equation}	
and integrate on $\rrd$, we get
\begin{equation}\label{e3.13a}
\barr{l}
-\dd\int_\rrd u^-_\vp\calx_\delta(u^-_\vp)dx\\
\qquad\ge
\lbb\dd\int_\rrd(D  b_\vp(u_\vp)u_\vp+(K_\vp*\vf_\vp(u_\vp))\vf_\vp(u_\vp))\cdot\nabla u^-_\vp\calx'_\delta(u^-_\vp)dx\vsp 
\qquad+\lbb\dd\int_{[|u^-_\vp|\le\delta]}(|D  b_\vp(u_\vp)|
+|K_\vp*\vf_\vp|u_\vp)|)|\nabla u^-_\vp|dx.\earr\end{equation}
Since $|Db_\vp(u_\vp)|+|K_\vp*\vf_\vp(u_\vp)|\in L^2(Q_T)$ and $\nabla u_\vp\in L^2(Q_T;\rrd)$, it follows that the right hand side of \eqref{e3.13a} goes to zero as $\delta\to0$ and this yields 
$$\lim_{\delta\to0}\int_\rrd u^-_\vp\calx_\delta(u^-_\vp)dx=0.$$
This implies that $u_\vp\ge0$, a.e. in $\rrd.$ By density, this extends to all $f\in H\1$, $f\ge0$, and so \eqref{e3.4} follows. Assume now that $f\in H\1\cap L^1$ and note first that $u_\vp\in L^1$. Since $u_\vp\in H^1$, this follows by multiplying \eqref{e3.11} with $\calx_\delta(u_\vp)$ and integrate on $\rrd$. Then, one obtains as above
\begin{equation}\label{e3.12a}
\limsup_{\delta\to0}\int_\rrd u_\vp\calx_\delta(u_\vp)dx\le\int_\rrd|f|dx,
\end{equation}	 
and, therefore, $|u_\vp|_1\le|f|_1,\ \ff\vp>0$.

If $f\in\calp$, then we have by \eqref{e3.11} that $u_\vp\ge0$ on $Q_T$ and also that
$$\int_\rrd u_\vp dx=\int_\rrd f\,dx=1,\ \ff\vp>0,$$
that is, $u_\vp\in\calp$ and so \eqref{e3.5} follows.

Assume now that $f\in\calp\cap L^\9$, \eqref{e3.7} holds and let's prove \eqref{e3.6}. To this end, we need some sharp estimates on the convolution product $K_\vp*u_\vp$, which will be obtained from hypothesis (iv) by writing $K=K_1+K_2$, where
$$K_1(x)=\calx_{B_1}(x)K(x),\ 
K_2(x)=\calx_{B^c_1}(x)K(x),\ x\in\rrd,$$
where $\calx_{B_1}(x)=1$, $\ff x\in B_1$, 
$\calx_{B_1}(x)=0$, $\ff x\in B^c_1$, and
$\calx_{B^c_1}=1-\calx_{B_1}$. By \eqref{e3.11}, we have
\begin{equation}\label{e3.13}
\barr{l}
\hspace*{-8mm}u_\vp-N-\lbb\Delta(\beta_\vp(u_\vp))
+\lbb\,\divv(Db^*_\vp(u_\vp)-b^*_\vp(N))\vsp
+(K_\vp*\vf_\vp(u_\vp))(\vf_\vp(u_\vp)-\vf_\vp(N))\vsp
=f-N-\lbb b^*_\vp(N)\divv\,D-\lbb\vf_\vp(N)\divv(K_\vp*\vf_\vp(u_\vp))\vsp
=f-N-\lbb b^*_\vp(N)\divv\,D 
-\lbb\vf_\vp(N)
\(\eta\(\dd\frac{|x|}\vp\) 
\divv\,K\)*\vf_\vp(u_\vp)\vsp
-\dd\frac\lbb\vp\,\vf_\vp(N)
\(\(\eta'\(\dd\frac{|x|}\vp\)
\frac{x\cdot K(x)}{|x|}\)*\vf_\vp(u_\vp)\) \vsp 
 -\lbb b^*_\vp(N)\divv\,D\le f-N-\lbb b^*_\vp(N) \divv\,D\vspace*{2mm}\\
 -\dd \frac{\lbb\vf_\vp(N)}\vp\!\!
 \int_{[\vp\le|\bar x|\le2\vp]}
\!\!\eta'\(\frac{|\bar x|}\vp\)\!
\frac{K(\bar x)\cdot\bar x}{|\bar x|} \,\vf_\vp(u_\vp(x{-}\bar x))d\bar x\vspace*{3mm}\\
{+}\lbb\vf_\vp(N)|(\divv\,K)^-|_\9|u_\vp|_1
\le f{-}N
{+}\lbb N
 \left\|\frac{(K(x)\cdot x)^-}{|x|}\right\|_{L^\9(B_1)}
 |u_\vp|_1\vspace*{2mm}\\
  {+}\lbb N|(\divv\,K)^-|_\9|u_\vp|_1
\le f{-}N{+}N\lbb\gamma\le0,\mbox{ a.e. in }\rrd, 
\earr\end{equation}	
where $\gamma$ is defined by \eqref{e3.7}.\newpage\n (We recall that $u_\vp$ and $b^*_\vp$ are nonnegative on $Q_T$ and $|u_\vp|_1\le|f|_1=1$.) 

If multiply \eqref{e3.13} by $\calx_\delta((u_\vp-N)^+)$ and integrate on $\rrd$, we get
$$\barr{l}
\dd\int_\rrd(u_\vp-N)^+\calx_\delta((u_\vp-N)^+)dx
\le-\lbb\int_\rrd\nabla(\beta_\vp(u_\vp)\cdot\nabla\calx_\delta((u_\vp-N)^+)dx\vsp\qquad\dd
+\lbb\int_\rrd D(b^*_\vp(u_\vp)-b^*_\vp(N))\cdot\nabla(\calx_\delta((u_\vp-N)^+)dx\vsp\qquad\dd
+\lbb\int_\rrd(K_\vp*\vf_\vp(u_\vp))(\vf_\vp(u_\vp)-\vf_\vp(N))\cdot\nabla(\calx_\delta((u_\vp-N)^+))dx\vsp\qquad\dd
\le\lbb\int_{[0\le(u_\vp-N)^+\le\delta]}
(|b^{*'}_\vp|_\9|D|+\frac1\vp\,|K_\vp*\vf_\vp(u_\vp)|)
|\nabla(u_\vp-N)^+|dx.
\earr$$because $\beta'_\vp\ge0$, $u_\vp\in H^1$ and  $|\vf_\vp(u_\vp)-\vf_\vp(N)|\le\frac1\vp\,|u_\vp-N|,$  a.e. in $Q_T$.

Taking into account that $|D|$ and $|K_\vp*\vf_\vp(u_\vp)|$ are in $L^2$, we get for $\delta\to0$ that
$$|(u_\vp-N)^+|_1=\lim_{\delta\to0}\int_\rrd(u_\vp-N)^+\calx_\delta((u_\vp-N)^+)dx=0.$$
Hence, $0\le u_\vp\le N$, a.e. in $\rrd$, as claimed.
\end{proof}
 
 Next, we shall take $N=|u_0|_\9$.
 
\begin{lemma}
	\label{l3.2} For each $u_0\in H\1$ there is a unique mild solution $u_\vp\in C([0,\9);H\1)$ to the Cauchy problem
	\begin{equation}\label{e3.14}
	\barr{l}
	\dd\frac{du_\vp}{dt}+A_\vp u_\vp=0\mbox{ on }(0,\9),\vsp
	u_0(0)=u_0,\earr
	\end{equation}
	which is a smooth solution, that is, $\frac{d^+}{dt}u_\vp(t)$ exists everywhere on $[0,\9)$ if $u_0\in H^1=D(A_\vp)$. Moreover, if $u_0\in\calp\cap H\1$, then $u_\vp(t)\in\calp$, $\ff t\ge0,$ 	
	and, for every $T>0$,
\begin{eqnarray}
&u_\vp,\beta_\vp(u_\vp)\in L^2(0,T;H^1)\label{e3.15}\\
&\dd\frac{du_\vp}{dt}\in L^2(0,T;H\1)\label{e3.16}\\
&\divv((K_\vp*u_\vp)\in L^2(0,T;L^2)\label{e3.17}\\[2mm]
&\label{e3.18}
\barr{c}
\dd\frac{du_\vp}{dt}\,(t)-\Delta\beta_\vp(u_\vp(t))+\divv(Db_\vp(u_\vp(t))u_\vp(t)\vsp 
+(K_\vp*\vf_\vp(u_\vp))u_\vp)(t))=0,\mbox{ a.e. }t>0.\earr\\[2mm]
&0\le u_\vp(t,x)\le\exp(\gamma t)|u_0|_\9,\ \mbox{ a.e. }(t,x)\in(0,\9)\times\rrd.\label{e3.19}
\end{eqnarray}
\end{lemma}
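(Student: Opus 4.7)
I would deduce Lemma~\ref{l3.2} from the quasi-$m$-accretivity of $A_\vp$ established in Lemma~\ref{l3.1} combined with the standard nonlinear semigroup theory recalled in \eqref{e1.13}--\eqref{e1.18a}, and then transfer the remaining structural and regularity properties by iterating the resolvent estimates through the Crandall--Liggett exponential formula.

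\textbf{Mild solution and smoothness.} Since $A_\vp$ is quasi-$m$-accretive on $H\1$ with dense domain $D(A_\vp)=H^1$, the K$\rm\bar o$mura/Crandall--Liggett theorem provides, for each $u_0\in H\1$, a unique mild solution $u_\vp\in C([0,\9);H\1)$ of \eqref{e3.14} given by the exponential formula
$$u_\vp(t)=\lim_{n\to\9}\(I+\tfrac{t}{n}A_\vp\)^{-n}u_0\quad\mbox{in }H\1,$$
uniformly on compact subintervals; when $u_0\in D(A_\vp)=H^1$, this is a strong solution right-differentiable everywhere on $[0,\9)$ in the sense of \eqref{e1.13}.

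\textbf{Invariance of $\calp$ and $L^\9$-propagation.} Setting $\lbb=t/n$ and $u_n=(I+\lbb A_\vp)\1u_{n-1}$ (with $u_0$ the initial datum), \eqref{e3.4}--\eqref{e3.5} give $u_n\in\calp$ whenever $u_0\in\calp\cap H\1$. Iterating \eqref{e3.6}--\eqref{e3.7} with $N_k=|u_0|_\9(1-\lbb\gamma)^{-k}$ for $\lbb$ sufficiently small gives $0\le u_n\le|u_0|_\9(1-\lbb\gamma)^{-n}$ a.e. in $\rrd$. Since $(1-t\gamma/n)^{-n}\to e^{\gamma t}$ and $u_n\to u_\vp(t)$ in $H\1$, extracting an a.e.\ subsequential limit yields \eqref{e3.19}, while the closedness of $\calp$ in $L^1$ together with positivity gives $u_\vp(t)\in\calp$ for all $t\ge0$.

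\textbf{Regularity and main obstacle.} For $u_0\in H^1\cap\calp\cap L^\9$, multiplying \eqref{e3.18} by $\beta_\vp(u_\vp)\in H^1$ and integrating over $(0,t)\times\rrd$, I would absorb the drift terms (bounded in $L^2(Q_T)$ thanks to hypothesis~(ii), \eqref{e3.1}, \eqref{e3.19} and $|\vf_\vp|\le\vp\1$) into $\tfrac12\int|\nabla\beta_\vp(u_\vp)|^2$; together with $\beta_\vp'\ge\alpha+\vp$ this proves \eqref{e3.15}. Estimate \eqref{e3.17} is immediate from Young's inequality applied to $\divv(K_\vp*u_\vp)=(\divv K_\vp)*u_\vp$ via \eqref{e3.1}, and \eqref{e3.16} together with the pointwise identity \eqref{e3.18} are read off \eqref{e3.14} directly. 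General $u_0\in\calp\cap H\1$ is then handled by approximation via the quasi-contraction \eqref{e1.15}. The main obstacle, in my view, is the apparent circularity in this last step: transferring the $L^\9$-bound from the discrete iterates $u_n$ to the continuous limit $u_\vp(t)$ requires a.e.\ convergence of a subsequence, which demands Aubin--Lions compactness in turn based on the energy estimate whose derivation uses \eqref{e3.19}. This is resolved by performing the energy estimate on the implicit Euler iterates $u_n$ themselves (where the $L^\9$-bound is already available from Lemma~\ref{l3.1}) and then passing to the continuous limit, at which point all of \eqref{e3.15}--\eqref{e3.19} are inherited by $u_\vp$.
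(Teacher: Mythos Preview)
Your proposal is correct and follows the paper's approach: existence and smoothness via the quasi-$m$-accretivity of $A_\vp$ and the Crandall--Liggett/K\=omura theory, $\calp$-invariance and the $L^\infty$ bound \eqref{e3.19} by iterating the resolvent estimates of Lemma~\ref{l3.1} through the exponential formula, and the energy identities \eqref{e3.21}--\eqref{e3.22} (multiplication by $u_\vp$ and by $\beta_\vp(u_\vp)$) for the $H^1$ regularity, extended by density. The ``circularity'' you flag is not actually present: since the iterates $(I+\tfrac{t}{n}A_\vp)^{-n}u_0$ are uniformly bounded in $L^\infty$ and converge to $u_\vp(t)$ in $H^{-1}$, weak-$*$ compactness in $L^\infty$ together with uniqueness of distributional limits already yields $0\le u_\vp(t)\le e^{\gamma t}|u_0|_\infty$ without any a.e.\ convergence or Aubin--Lions step, which is exactly how the paper passes to the limit.
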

 
\begin{proof} Since $A_\vp$ is quasi-$m$-accretive in $H\1$, the existence of a smooth solution $u_\vp\in W^{1,\9}(0,T;H\1)$, $\ff T>0$, to \eqref{e3.14} (respectively, a mild solution for $u_0\in H\1\setminus H^1$) follows by the existence theory recalled for the Cauchy problem \eqref{e1.16} in Section \ref{s1}. Moreover, we have
	$$u_\vp(t)=S_\vp(t)u_0,\ \ff t\ge0,\ u_0\in H\1,$$ where $S_\vp(t)$ is the continuous semigroup in $H\1$ given by the exponential formula \eqref{e1.15}, that is, 
\begin{equation}\label{e3.20}
S_\vp(t)u_0=\lim_{n\to0}\(I+\frac tn\,A_\vp\)^{-n}u_0,\ \ff t\ge0.\end{equation}
By \eqref{e3.4}--\eqref{e3.5} and \eqref{e3.20} it follows that $u_\vp\ge0$, a.,e. in $(0,\9)\times\rrd$, if $u_0\ge0$ and $u_\vp(t)\in\calp$, $\ff t\ge0$, if $u_0\in\calp$. 
 
If $u_0\in H^1=D(A_\vp)$, then $u_\vp$ is a strong solution to \eqref{e3.14}, that is, $t\to u_\vp(t)$ is a.e. differentiable in $H\1$ on $(0,\9)$ and
$$\barr{c}
\dd\frac{d^+}{dt}\,u(t)+A_\vp u_\vp(t)=0,\ \ff t\ge0,\vsp
\dd\frac{du_\vp}{dt}\in L^\9(0,T;H\1),\ u_\vp\in L^\9(0,T;H^1),\ \ff T>0.\earr$$In particular, it follows that \eqref{e3.15}--\eqref{e3.18} hold. Moreover, if multiply \eqref{e3.18} by $u_\vp$ and take into account that
$${}_{H\1}\<\frac d{dt}\,u_\vp(t),u_\vp(t)\>_{H^1}=\frac12\ \frac  d{dt}\,|u_\vp(t)|^2_2,\ \mbox{ a.e. }t>0,$$we get
\begin{equation}\label{e3.21}
\barr{l}
\dd\frac12\,|u_\vp(t)|^2_2+\int^t_0(\nabla\beta_\vp(u_\vp(s)),\nabla u_\vp(s))_2ds\vsp 
 =\dd\frac12\,|u_0|^2_2+
\dd\int^t_0(Db^*_\vp(u_\vp(s))+(K_\vp*\vf_\vp(u_\vp))\vf_\vp(u_\vp(s)),\nabla u_\vp(s))_2ds.\earr\end{equation}
Similarly, by multiplying with $\beta_\vp(u_\vp)$, we get
\begin{equation}\label{e3.22}
 \barr{l}
\dd\int_\rrd  j_\vp(u_\vp(t,x))dx+\int^t_0 |\nabla\beta_\vp(u_\vp(s,x))|^2_2ds=\dd\int_\rrd j(u_0)dx\\
\quad+\dd\int^t_0(Db^*_\vp(u_\vp(s))+(K_\vp*\vf_\vp((u_\vp(s)))\vf_\vp(u_\vp(s)),\nabla\beta_\vp(u_\vp(s)))_2.\earr\end{equation}
By \eqref{e3.21}--\eqref{e3.22} we see that
$$u_\vp\in L^\9(0,T;L^2)\cap L^2(0,T;H^1),\ \ff \vp>0,$$ and so,  by density one can extend \eqref{e3.15} and \eqref{e3.21}--\eqref{e3.22} to all $u_0\in L^2$. 

Finally, if $u_0\in \calp\cap L^\9$,   we have by \eqref{e3.6}--\eqref{e3.7} 
	$$0\le\(I+\frac tn\,A_\vp\)\1 u_0\le\(1-\frac tn\,\gamma\)\1|u_0|_\9,\ \ff t>0,\ n\in\nn.$$This yields
	$$0\le\(I+\frac tn\,A_\vp\)^{-n}u_0\le\(1-\frac tn\,\gamma\)^{-n}|u_0|_\9,\ \ff t>0,\ n\in\nn,$$
	and so, by \eqref{e3.20}, for $n\to\9$  we get \eqref{e3.19}, as claimed. 
\end{proof}

\n{\it Proof of Theorem} {\rm\ref{t2.1} (continued).}  Let $u_0\in\calp\cap L^\infty$. Then, by \eqref{e3.21} we~have
$$\barr{l}
|u_\vp(t)|^2_2+\alpha\dd\int^t_0|\nabla u_\vp(s)|^2_2ds\\ \qquad\quad\
\le|u_0|^2_2 +\,\dd\frac12\int^t_0(|Db_\vp(u_\vp(s))|^2_\9+|K_\vp*\vf_\vp(u_\vp(s))|^2_\9)|u_\vp(s)|^2_2ds.\earr$$
We have
$$|Db_\vp(u_\vp(s))|_\9\le|D|_\9|b|_\9,\ \ff s\ge0,$$while by (iv) it follows that
$$\barr{ll}
|K_\vp*\vf_\vp(u_\vp(t))|_\9
\!\!\!&\le\|K\|_{L^1(B_1)}|\vf_\vp(u_\vp(t))|_\9+\|K\|_{L^{p}(B^c_1)}
\|u_\vp(t)\|_{L^{p'}(B^c_1)}\vsp
&\le C(|u_\vp(t)|_\9+|u_\vp(t)|^{\frac1{p'}}_1|u_\vp(t)|^{\frac{p'-1}{p'}}_\9),\ \ff t>0,\earr$$
where $\frac1{p'}=1-\frac1p$. 

Since, as shown earlier, $|u_\vp(t)|_1\le1$ and $|u_\vp(t)|_\9\le C|u_0|_\9$, $\ff t\ge0$, we get the estimate 
\begin{equation}\label{e3.23}
|u_\vp(t)|^2_2+\dd\int^T_0|\nabla u_\vp(t)|^2_2dt\le CT(|u_0|_\9+|u_0|^{\frac1{p'}}_1|u_0|^{\frac{p'-1}{p'}}_\9),\ \ff t\in(0,T).
\end{equation}
Similarly, it follows by \eqref{e3.22} that, for all $T>0$,
\begin{equation}\label{e3.24} 
\int^T_0 |\nabla\beta_\vp(u_\vp(t))|^2_2dt
=C|(u_0)|^2_2+CT(|u_0|_\9+|u_0|^{\frac1{p'}}_1|u_0|^{\frac{p'-1}{p'}}_\9)\le C. \end{equation}
(Here, we have denoted by $C$ several positive constants independent of $\vp$.) Hence, on a subsequence $\{\vp\}\to0$, we have
\begin{equation}\label{e3.25}
\hspace*{-5mm}\barr{rcll}
u_\vp&\to&u^*&\mbox{weakly in }L^2_{\rm loc}(0,\9;H^1)\vsp 
\dd\frac{du_\vp}{dt}&\to&\dd\frac{du^*}{dt}&\mbox{weakly in }L^2_{\rm loc}(0,\9;H\1)\vsp 
u_\vp&\to&u^*&\mbox{weak-star in }L^\9((0,\9)\times\rrd),\\
&&&\mbox{weakly in $L^2_{\rm loc}(0,\9)$, and}\vsp
&&&\mbox{strongly in }L^2_{\rm loc}(0,\9;L^2)\vsp 
Db_\vp(u_\vp)u_\vp&\to&Db(u^*)u^*&\mbox{weakly in }L^1_{\rm loc}(0,\9;L^2)\vsp 
\beta_\vp(u_\vp)&\to&\beta(u^*)&\mbox{weakly in }L^2_{\rm loc}(0,\9;H^1)\vsp 
(K_\vp*\vf_\vp(u_\vp))\vf_\vp(u_\vp)&\to&v^*&\mbox{weak-star in }L^\9((0,\9)\times\rrd).
\earr\end{equation}
Clearly, $u^*$ satisfies the equation
$$\barr{l}
\dd\frac{du^*}{dt}-\Delta\beta(u^*)+\divv(v^*+Db(u^*)u^*)=0\mbox{ in }\cald'(0,\9)\times\rrd\vsp
u^*(0)=u_0.\earr$$
Since, as seen earlier,
$$|K_\vp*\vf_\vp(u_\vp)|_\9\le C,\ \ff\vp>0,$$and by \eqref{e3.1}, \eqref{e3.2}, \eqref{e3.25},
$$K_\vp*\vf_\vp(u_\vp)\vf_\vp(u_\vp)=(K_\vp*u_\vp)u_\vp\to(K*u^*)u^*,$$
a.e. in $(0,\9)\times\rrd$ as $\vp\to0$, we infer that $v^*\equiv(K*u^*)u^*$, and so  \eqref{e2.1}--\eqref{e2.4} hold. More precisely, we have

\begin{eqnarray}
&\barr{l}
\dd\frac{du^*}{dt}-\Delta\beta(u^*)+\divv(Db(u^*)u^*+(K*u^*)u^*)=0\\\hfill\mbox{ in }\cald'((0,\9)\times\rrd)\\
u^*(0,x)=u_0(x),\ x\in\rrd\earr\label{e3.26}\\[3mm]
\!\!\!\!\!\!&u^*,\beta(u^*)\in L^2_{\rm loc}(0,\9;H^1),(K*u^*)u^*\in L^\9((0,\9)\times\rrd)\label{e3.27}\\[1mm]
\!\!\!\!\!\!&u^*(t)\in\calp\cap L^\9,\ \ff t\ge0\label{e3.28}\\[1mm]
\!\!\!\!\!\!&\dd\frac{du^*}{dt}\in L^2(0,T;H\1),\ \ff T>0.\label{e3.29}
\end{eqnarray}
Moreover, since $u^*\in L^\9(0,T;L^\9),$ $\ff T>0$, it follows by \eqref{e3.29} that \eqref{e2.8a} holds.

Let $u^*,v^*$ be two solutions to \eqref{e3.26}--\eqref{e3.29} corresponding to $u_0,v_0\in\calp\cap L^\9$. We have 
\begin{equation}\label{e3.30}
\barr{l}
\!\!\!\!\!\dd\frac12\ \frac  d{dt}\,|u^*(t)-v^*(t)|^2_{-1}
+(\beta(u^*(t))-\beta(v^*(t)),u^*(t)-v^*(t))_2\vsp 
=\<\beta(u^*(t))-\beta(v^*(t)),u^*(t)-v^*(t)\>_{-1}\vsp
-\<\divv(D(b(u^*(t))u^*(t)-b(v^*(t))v^*(t))),u^*(t)-v^*(t)\>_{-1}\vsp
-\<\divv((K*(u^*(t)-v^*(t))u^*(t)),u^*(t)-v^*(t))\>_{-1}\vsp 
-\<\divv((K*v^*(t))(u^*(t)-v^*(t))),u^*(t)-v^*(t)\>_{-1}\vsp
\le(\|\beta'\|_{L^\9(0,N)}+|D|_\9\|(b^*)'\|_{L^\9(0,N)})
|u^*(t)-v^*(t)|_2|u^*(t)-v^*(t)|_{-1}\vsp
+(|K\!*\!(u^*(t)-v^*(t))u^*(t)|_2
{+}|(K\!*\!v^*(t))(u^*(t){-}v^*(t))|_2)
|u^*(t){-}v^*(t)|_{-1},\vsp
\hfill\mbox{ a.e. }t>0. 
\earr\hspace*{-15mm}
\end{equation}
On the other hand, we have 
$$\barr{l}
|(K*(u^*-v^*))u^*|_2\vsp\qquad
\le|((\calx_{B_1}K)*(u^*-v^*))u^*|_2
+|((\calx_{B^c_1}K)*(u^*-v^*))u^*|_2\vsp\qquad
\le\|K\|_{L^1(B_1)}|u^*-v^*|_2|u^*|_\9+|(\calx_{B^c_1}K)*(u^*-v^*)|_r|u^*|_{\frac{2r}{r-2}},\earr$$
where $r\in[2,\9]$.  By the Young inequality and hypothesis (iv)  we have, for $r=\frac{2p}{2-p}$,
$$|(\calx_{B^c_1}K)*(u^*-v^*)|_r
\le\|K\|_{L^p(B^c_1)}|u^*-v^*|_2,$$and, since $|u^*(t)|_1+|u^*(t)|_\9\le C(|u_0|_1+|u_0|_\9),$ this yields
$$|K*(u^*-v^*)u^*|_2\le C|u^*-v^*|_2, \ff t\ge0.$$ Similarly,  it follows that
$$\barr{l}
|(K*v^*)(u^*-v^*)|_2
\le|K*v^*|_\9|u^*-v^*|_2\vsp
\qquad 
\le\|K\|_{L^1(B_1)}|v^*|_\9|u^*-v^*|_2+\|K\|_{L^p(B^c_1)}\|
v^*\|_{L^{p'}(B^c_1)}
|u^*-v^*|_2\vsp
\qquad\le C|u^*-v^*|_2,\mbox{ a.e. }t>0.
\earr$$
Substituting into \eqref{e3.30}, we get  for some $\oo>0$,
$$\frac d{dt}\,|u^*(t)-v^*(t)|_{-1}\le\oo|u^*(t)-v^*(t)|_{-1},\mbox{ a.e. }t\in(0,T),$$and, therefore,
\begin{equation}\label{e3.31}
|u^*(t)-v^*(t)|_{-1}\le\exp(\oo t)|u_0-v_0|_{-1},\ \ff t\ge0.\end{equation}

\begin{remark}\label{r3.3}\rm We note that the first part of hypotheses (iv), that is, $K\in L^1(B_1;\rrd)\cap L^p(B^c_1;\rrd)$ for $p\in[1,2]$ was used in the uniqueness of the solution to \eqref{e1.1} only. So, as for as concerns the existence, hypotheses (i)--(iii) and \eqref{e1.2} are sufficient. It should be noted that \eqref{e1.2} holds the  Biot--Savart kernel \eqref{e1.11b}. 
 \end{remark}

\section{The uniqueness of distributional solutions}\label{s4}
\setcounter{equation}{0}

We shall assume here that {\it besides {\rm(i)--(iv)} the following condition hold}
\begin{itemize}
	\item[\rm(v)] $K\in L^\9(B^c_1;\rrd)$.\end{itemize}
Clearly, (v) implies the last condition in \eqref{e1.2} and is automatically satisfied by all examples of kernels $K$ given in Section \ref{s1}.

We recall that the function $u\in L^1_{\rm loc}([0,T)\times\rrd)$ is said to be a {\it distributional solution} to NFPE \eqref{e1.1} on $Q_T=(0,T)\times\rrd$ if 
\begin{eqnarray}
&\beta(u)\in L^1_{\rm loc}(Q_T)\label{e4.1}\\[1mm]
&(Db(u)+K*u)u\in L^1_{\rm loc}(Q_T;\rrd)\label{e4.2}
\end{eqnarray}
\begin{eqnarray} 
&\barr{r}
\dd\int_{Q_T}(u(t,x)\frac{\pp\vf}{\pp t}\,(t,x)
+\beta(u(t,x))\Delta\vf(t,x)
+u(t,x)(D(x)b(u(t,x))\\
+(K*u(t))(x))\cdot\nabla\vf(t,x))dtdx
+\dd\int_\rrd u_0(x)\vf(0,x)dx=0,\\[3mm]
\ff\vf\in C^\9_0([0,T)\times\rrd).\earr\qquad\label{e4.3}
\end{eqnarray}	
We have
\begin{theorem}\label{t4.1} Let $u_1,u_2\in L^1(Q_T)\cap L^\9(Q_T)$ be two distributional solutions to \eqref{e1.1} such that
\begin{equation}\label{e4.4}
\lim_{t\downarrow0}\mbox{ess\,sup}\int_\rrd(u_1(t,x)-u_2(t,x))\psi(x)dx=0,\ \ff\psi\in C^\9_0(\rrd).\end{equation}	
Then $u_1\equiv u_2.$
\end{theorem}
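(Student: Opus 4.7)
Set $w := u_1 - u_2 \in L^1(Q_T)\cap L^\9(Q_T)$. Using the pointwise factorizations
$\beta(u_1)-\beta(u_2) = a\,w$ with $a(t,x):=\int_0^1\beta'(u_2+sw)\,ds\in[\alpha, M]$ (where $M:=\|\beta'\|_{L^\9(-N,N)}$, $N:=\max(|u_1|_\9,|u_2|_\9)$), $b(u_1)u_1-b(u_2)u_2 = g\,w$ with $g\in L^\9$, and
$(K{*}u_1)u_1-(K{*}u_2)u_2 = (K{*}u_1)w + u_2(K{*}w)$,
subtracting the identities \eqref{e4.3} for $u_1$ and $u_2$ gives that $w$ solves in $\cald'(Q_T)$ the linear non-local equation
$$
\pp_t w - \Delta(aw) + \divv(Fw) + \divv(u_2\,(K{*}w)) = 0,
$$
with $F := Dg + K{*}u_1 \in L^\9(Q_T;\rrd)$; note that hypothesis (v) combined with $K\in L^1(B_1)$ yields $\|K{*}u\|_{L^\9_x}\le\|K_1\|_{L^1}|u|_\9+\|K_2\|_{L^\9(B_1^c)}|u|_1$ for all $u\in L^1\cap L^\9$. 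Moreover \eqref{e4.4} provides the weak initial condition $w(t)\rightharpoonup 0$ as $t\downarrow 0$. The plan is to prove $\int_{Q_T}w\varphi\,dx dt=0$ for every $\varphi\in C_0^\9((0,T)\times\rrd)$, which forces $w\equiv 0$, via a Brezis--Crandall style duality argument.

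Given such $\varphi$, for each $\vp>0$ introduce smooth spatial mollifications $a_\vp,F_\vp,u_{2,\vp}$ of $a,F,u_2$ (with $a_\vp\ge\alpha$), and consider the backward non-local linear parabolic problem
$$
\pp_t\psi_\vp + a_\vp\Delta\psi_\vp - F_\vp\cdot\nabla\psi_\vp - \wt K{*}(u_{2,\vp}\nabla\psi_\vp) = \varphi,\qquad \psi_\vp(T,\cdot) = 0,
$$
with $\wt K(z):=K(-z)$, the non-local term being the formal adjoint of $\divv(u_2(K{*}\,\cdot\,))$. Classical theory for smooth-coefficient linear parabolic equations yields a unique smooth solution $\psi_\vp$; the crucial uniform estimates
$$
|\psi_\vp|_\9 + \|\nabla\psi_\vp\|_{L^2(Q_T)} + \|\sqrt{a_\vp}\,\Delta\psi_\vp\|_{L^2(Q_T)} \le C(\varphi)
$$
are obtained by testing the dual equation against $\psi_\vp$ and against $-\Delta\psi_\vp$ respectively, absorbing the non-local drift via the bound
$\|\wt K{*}(u_{2,\vp}\nabla\psi_\vp)\|_{L^2_x} \le C(\|u_2\|_\9 + \|u_2\|_{L^2})\|\nabla\psi_\vp\|_{L^2_x}$, which is a direct consequence of Young's inequality applied to $K_1\in L^1(B_1)$ and $K_2\in L^\9(B_1^c)$.

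Using $\psi_\vp$ (approximated by $C_c^\9$ test functions) in the distributional identity for $w$, integrating by parts, and discarding the $t=0$ boundary term by \eqref{e4.4}, yields
$$
\int_{Q_T}w\varphi\,dx dt = \int_{Q_T}w(a-a_\vp)\Delta\psi_\vp - \int_{Q_T}w(F-F_\vp)\cdot\nabla\psi_\vp - \int_{Q_T}(K{*}w)\cdot(u_2-u_{2,\vp})\nabla\psi_\vp.
$$
The first term is the delicate one; it is controlled by
$$
\left|\int_{Q_T}w(a-a_\vp)\Delta\psi_\vp\right| \le |w|_\9\,\Big\|\tfrac{a-a_\vp}{\sqrt{a_\vp}}\Big\|_{L^2(Q_T)}\,\|\sqrt{a_\vp}\,\Delta\psi_\vp\|_{L^2(Q_T)},
$$
which tends to $0$ since $a_\vp\ge\alpha$ and $a_\vp\to a$ strongly in $L^2_{\rm loc}$. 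The remaining two terms tend to $0$ by strong $L^2_{\rm loc}$-convergence of $F_\vp,u_{2,\vp}$ combined with the uniform bound on $\nabla\psi_\vp$ (and $K{*}w\in L^\9(Q_T)$). Hence $\int_{Q_T}w\varphi=0$ for every such $\varphi$, so $u_1=u_2$ a.e.\ in $Q_T$.

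\textbf{Main obstacle.} The principal technical point is the uniform Brezis--Crandall bound on $\sqrt{a_\vp}\,\Delta\psi_\vp$ in the presence of the non-local adjoint drift $\wt K{*}(u_{2,\vp}\nabla\psi_\vp)$. It is precisely at this step that the supplementary hypothesis (v), $K\in L^\9(B_1^c)$, is essential: together with $K\in L^1(B_1)$ it makes $K{*}\,\cdot\,$ a bounded operator on $L^2\cap L^1$ into $L^2$, so the non-local term is a controllable lower-order perturbation in the parabolic energy estimate. This is consistent with the fact that the $2$D Biot--Savart kernel \eqref{e1.11b} is excluded by (iv)--(v) and demands the specialized arguments of \cite{9}.
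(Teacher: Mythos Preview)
Your duality argument is a legitimate alternative, but it is \emph{not} the route the paper takes. The paper works forward in time with an $H^{-1}$--type energy: it applies the resolvent $\Phi_\vp=(\vp I-\Delta)^{-1}$ directly to the difference equation for $z=u_1-u_2$, sets $h_\vp(t)=(\Phi_\vp z(t),z(t))_2$, and derives the differential inequality $h_\vp'+\alpha|z|_2^2\le Ch_\vp+2\vp(\Phi_\vp(\beta(u_1)-\beta(u_2)),z)_2$. The whole proof then hinges on the ``Claim'' $\sqrt{\vp}\,|(\Phi_\vp w,z)_2|\le C$, established by a dimension--dependent argument (Sobolev embedding for $d\ge3$, the logarithmic Green kernel for $d=2$), after which Gronwall and $\vp\to0$ finish. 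No dual/backward equation is ever solved. Your Brezis--Crandall/Pierre approach instead mollifies the coefficients, solves a backward adjoint problem, and passes to the limit in the commutator terms. What the paper's method buys is that it stays entirely inside the $H^{-1}$ framework used throughout the article and never needs to \emph{construct} a solution to an auxiliary nonlocal PDE; what your method buys is that it avoids the somewhat ad hoc dimension split in the Claim, replacing it by the standard $\|\sqrt{a_\vp}\,\Delta\psi_\vp\|_{L^2}$ estimate.

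Two points in your sketch deserve tightening. First, the dual problem you write down is a \emph{nonlocal} linear parabolic equation; calling on ``classical theory'' is a slight overreach, though existence does follow by treating $\psi\mapsto\wt K*(u_{2,\vp}\nabla\psi)$ as a bounded perturbation $H^1\to L^2$ of the uniformly parabolic principal part. Second, your listed estimate $|\psi_\vp|_\9\le C$ is not obtained by either of the energy tests you describe, and a maximum principle is not immediate because of the nonlocal term; fortunately you never use this bound in the convergence step (only the $L^2$ bounds on $\nabla\psi_\vp$ and $\sqrt{a_\vp}\,\Delta\psi_\vp$ enter), so simply drop the claim. The handling of the $t=0$ boundary term via \eqref{e4.4} and the spatial cut--off needed to insert $\psi_\vp$ into \eqref{e4.3} are routine but should be spelled out; the paper, by contrast, disposes of the initial condition in one line using the $H^1$--continuity of $t\mapsto\Phi_\vp z(t)$.
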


\begin{proof} We shall use a regularization argument similar to that in \cite{7}, \cite{8} (see, also, \cite{9}). Namely, we set $z=u_1-u_2$, $w=\beta(u_1)-\beta(u_2)$ and get for $z$ the equation
\begin{equation}\label{e4.5}
\barr{r}
\dd\frac{\pp z}{\pp t}-\Delta w+\divv(D((b(u_1)-b(u_2))u_1+b(u_2)z))\vsp
+\divv ((K*z)u_1-z(K*u_2))=0\,\mbox{ in }\cald'(Q_T).\earr\end{equation}	
Consider the operator $\Phi_\vp=(\vp I-\Delta)\1\in L(L^2,L^2).$ We have
\begin{equation}\label{e4.6a}
\vp\Phi_\vp(y)-\Delta\Phi_\vp(y)=y\mbox{ in }\cald'(\rrd),\ \ff y\in L^2.\end{equation}	
We see that $\Phi_\vp\in L(L^2,H^2)$ and applying $\Phi_\vp$ to \eqref{e4.5} we get
\begin{equation}\label{e4.6}
\barr{l}
\dd\frac d{dt}\,\Phi_\vp(z(t))+w(t)-\vp\Phi_\vp(w(t))\vsp\quad
+\divv(\Phi_\vp(z(t))(D((b(u_1(t))-b(u_2(t)))
+b(u_2(t))z(t)))\vsp\quad
+\divv(\Phi_\vp((K*z(t))u_1(t)-z(t)(K*u_2(t))))=0,
\mbox{ a.e. }t\in(0,T).\earr\end{equation}	
(We note that the function $t\to\Phi_\vp(z(t))$ is in $W^{1,2}([0,T];L^2)$, $\ff T>0$, and therefore it is $L^2$-valued absolutely continuous on $[0,T]$.) We set
\begin{equation}\label{e4.7}
h_\vp(t)=(\Phi_\vp(z(t)),z(t))_2=\vp|\Phi_\vp(z(t))|^2_2+|\nabla\Phi_\vp(z(t))|^2_2,\ t\in[0,T],\end{equation}	
and so, by \eqref{e4.6} we see that $h_\vp$ is absolutely continuous on $[0,T]$ and 
\begin{equation}\label{e4.8}
\hspace*{-5mm}\barr{ll}
h'_\vp(t)=2\(\dd\frac d{dt}\,\Phi_\vp(z(t)),z(t)\)_2=-2(w(t),z(t))_2+2\vp(\Phi_\vp(w(t)),z(t))_2\vsp 
+((K*z(t))u_1(t)-z(t)(K*u_2(t)),\nabla\Phi_\vp(z(t)))_2
+(\Psi(t),\nabla\Phi_\vp(z(t)))_2,\vsp\hfill\mbox{ a.e. }t\in(0,T),\earr\end{equation}	
where $\Psi:(0,T)\times\rrd\to\rrd$ is given by
\begin{equation}\label{e4.9}
\Psi\equiv D(b(u_1)-b(u_2)u_1+b(u_2)z)).
\end{equation}	
We have
\begin{equation}\label{e4.10}
h_\vp(0+)=\lim_{t\to0}h_\vp(t)=0,\ \ff\vp>0.\end{equation}	
Indeed, since $t\to\Phi_\vp(z(t))$ is $H^1$-continuous on $[0,T]$, there is \mbox{$\lim\limits_{t\to0}\Phi_\vp(z(t))=g$}  in $H^1$. Now, we may write
$$\barr{ll}
0\!\!\!&
\le h_\vp(t)\le|\Phi_\vp(z(t))-g|_{H^1}|z(t)|_{-1}\vsp&+|g-\vf|_{H^1}|z(t)|_{-1}
+|(z(t),\vf(t))_2|,\ \ff\vf\in C^\9_0(\rrd),\ \ff t\in(0,T).\earr$$
Since $C^\9_0(\rrd)$ is dense in $H^1$, we get by \eqref{e4.4} letting $t\to0$, that \eqref{e4.10} holds.

Next, by (i) and \eqref{e4.8}--\eqref{e4.9} we get
$$\barr{ll}
h'_\vp(t)\!\!\!&+2\alpha|z(t)|^2_2
\le2\vp(\Phi_\vp(w(t)),z(t))_2
+C|z(t)|_2|\nabla\Phi_\vp(z(t))|_2\vsp&+(|(K*z(t))_2u_1(t)|_2+|z(t)(K*u_2(t))|_2)|\nabla\Phi_\vp(z(t))|_2,\mbox{ a.e. }t\in(0,T).\earr$$
By Young's inequality, for $\frac1r=\frac1p-\frac12,$ $\frac1{r'}=1-\frac1r,$ we have by hypothesis~(iv)
$$\barr{ll}
|u_1(K*z)|_2\!\!\!&\le m(B_1)|u_1|_\9\|K*z\|_{L^2(B_1)}
+|u_1|_{r'}\|K*z\|_{L^r(B^c_1)}\vsp 
&\le m(B_1)|u_1|_\9\|K\|_{L^1(B_1)}|z|_2
+|u_1|_{r'}\|K\|_{L^p(B^c_1)}\|z\|_{L^2(B^c_1)}
\le C|z|_2,\earr$$
and by hypothesis (v),
$$|z(K*u_2)|_2\le|z|_2|K*u_2|_\9\le C|z|_2(|u_2|_\9+|u_2|_1)\mbox{ a.e. in }(0,T).$$
This yields
\begin{equation}\label{e4.11}
\barr{r}
h'_\vp(t)+2\alpha|z(t)|^2_2
\le2\vp(\Phi_\vp(w(t)),z(t))_2
+C|z(t)|_2|\nabla\Phi_\vp(z(t))|_2,\vsp\mbox{ a.e. }t\in(0,T),\earr
\end{equation}	
and so, by \eqref{e4.7} we get
$$h'_\vp(t)+\alpha|z(t)|^2_2\le Ch_\vp(t)+2\vp(\Phi_\vp(w(t)),z(t))_2,\mbox{ a.e. }t\in(0,T).$$
Taking into account that $z\in L^\9(Q_T)$ and that $\beta\in C^1(\rr)$, $\beta(0)=0, $  we have
\begin{equation}\label{e4.12}
|w(t)|\le C|z(t)|_2,\ \ff t\in(0,T).\end{equation}
(Here and everywhere in the following we denote by $C$ several positive constants independent of $\vp$.)	
\end{proof}

\bk\n{\bf Claim.} {\it We have, for all $\vp>0$ and some constant $C>0$ independent of $\vp$,}
\begin{equation}\label{e4.13}
\sqrt{\vp}|\Phi_\vp(w(t)),z(t))_2|\le C,\mbox{ a.e }t\in(0,T).\end{equation} 	

\begin{proof} By \eqref{e4.6} we have
\begin{equation}\label{e4.14a}
\vp|\Phi_\vp(w(t))|^2_2+|\nabla\Phi_\vp(w(t))|^2_2=(\Phi_\vp(w(t)),w(t))_2,\ t\in(0,T).\end{equation} 	
	Then, for $d\ge3 $ and $p^*=\frac{2d}{d-2}$ it follows by the Sobolev--Nirenberg--Gagliardo theorem (see, e.g., \cite{9a}, p~278) that
	
	$$(w,\Phi_\vp(w))_2\le|w|_{\frac{2d}{d+2}}|\Phi_\vp(w)|_{p^*}\le C|w|_{\frac{2d}{d+2}}|\nabla\Phi_\vp(w)|_2$$and so, 
\begin{equation}\label{e4.14aa}
0\le(w(t),\Phi_\vp(w(t)))_2
	\le C|z(t)|_{\frac{2d}{d+2}}
	|\nabla\Phi_\vp(w(t))|_2,\mbox{ a.e. }t\in(0,T),\end{equation} 
	and, similarly,
	\begin{equation}\label{e4.14aaa}
	0\le(z(t),\Phi_\vp(z(t)))_2
	\le C|z(t)|_{\frac{2d}{d+2}}|\nabla\Phi_\vp(z(t))|_2.\end{equation} 
	On the other hand, by \eqref{e4.12} and \eqref{e4.14a} we have
	$$\barr{lcl}
	|\nabla\Phi_\vp(w(t))|^2_2&\le&|\Phi_\vp(w(t))|_2|w(t)|_2
	\le C|z(t)|_2|\Phi_\vp(w(t))|_2\vsp
	&\le&C\vp^{-1}|z(t)|_2|w(t)|_2\le C^2\vp\1|z(t)|^2_2,
	\earr$$
	and so \eqref{e4.14aa} yields
	$$0\le(w(t),\Phi_\vp(w(t)))_2\le C\vp^{-\frac12}|z(t)|_{\frac{2d}{d+2}}|z(t)|_2,\ \mbox{ a.e.   } t\in(0,T).$$
	Similarly, by \eqref{e4.14aaa} we get
	$$(z(t),\Phi_\vp(z(t)))_2\le C\vp^{-\frac12}|z(t)|_{\frac{2d}{d+2}}|z(t)|_2,\ \mbox{ a.e. } t\in(0,T).$$Taking into account that
	\begin{equation}\label{e4.14aaaa}
	|(\Phi_\vp(w(t)),z(t))_2|
	\le(\Phi_\vp(w(t)),w(t))^{\frac12}_2
	(\Phi_\vp(z(t)),z(t))^{\frac12}_2, \end{equation} 
	we get
	$$\barr{ll}
	|(\Phi_\vp(w(t),z(t)))_2|\!\!\!&
	\le C^2\vp^{-\frac12}|z(t)|_{\frac{2d}{d+2}}|z(t)|_2
		\le C^2\vp^{-\frac12}|z(t)|^{\frac{d-2}{2d}}_\9|z(t)|^{\frac{d+2}{2d}}_1|z(t)|_2\vsp
			&\le C^2\vp^{-\frac12}|z(t)|^{\frac{d-1}d}_\9|z(t)|^{\frac{d+1}{d}}_1
			\le C^2\vp^{-\frac12},\ \ff t\in(0,T),\earr$$
			as claimed.

	In the case $d=2$, we have by \eqref{e4.6a}$$\Phi_\vp(w(t))=-E*(w(t)-\vp\Phi_\vp(w(t))),\ \ff t\in(0,T),$$where $E(x)=(\pi)\1\log\frac1{|x|}.$ This yields
\begin{equation}\label{e4.14}
\barr{ll}
|\Phi_\vp(w(t))|_\9\!\!\!
&\le\|E\|_{L^1(B_1)}(|w(t)|_\9+\vp|\Phi_\vp(w(t))|_\9)\vsp
&+\|E\|_{L^\9(B^c_1)}(|w(t)|_1+\vp|\Phi_\vp(w(t))|_1).\earr\end{equation}	
On the other hand, by multiplying \eqref{e4.6a} by $\calx_\delta(\Phi_\vp(w(t))$, where $\calx_\delta$ is given by \eqref{e3.12}, and integrating on  $\rrd$, yields  
$$\vp|\Phi_\vp(w(t))|_1\le|w(t)|_1,\mbox{ a.e. }t\in(0,T),$$
and so, by \eqref{e4.14} we have
$$|\Phi_\vp(w(t))|_\9\le C(|w(t)|_\9+|w(t)|_1\le C(|z(t)|_\9+|z(t)|_1)\le C,\ \ff t\in(0,T).$$
Hence
$$0\le(\Phi_\vp(w(t)),w(t))_2\le C|w(t)|_1\le C,\ \mbox{ a.e.   } t\in(0,T),$$
and, similarly,  
$$0\le(\Phi_\vp(z(t)),z(t))_2\le C(|z(t)|_\9,|z(t)|_1)\le C,\ \mbox{ a.e.   } t\in[0,T].$$
Then,  by \eqref{e4.14aaaa} we get
$$|(\Phi_\vp(w(t)),z(t))_2|\le C,\ \mbox{ a.e.   } t\in[0,T],$$and so \eqref{e4.13} follows. 
  Taking into account \eqref{e4.10} and \eqref{e4.14a}, it follows that
$$h_\vp(t)\le C\int^t_0 h_\vp(s)ds+C\sqrt{\vp},\ \ff t\in[0,T].$$Hence,
$$h_\vp(t)\le C\sqrt{\vp}\exp(Ct),\ \ff t\in[0,T],$$and so $\lim\limits_{\vp\to0}h_\vp(t)=0$ uniformly on $[0,T]$.
 
Recalling \eqref{e4.7}, we get for $\vp\to0$
$$\barr{rcll}
\nabla\Phi_\vp(z)&\to&0&\mbox{ in }L^2(0,T;(L^2)^d)\vsp 
\sqrt{\vp}\,\Phi_\vp(z)&\to&0&\mbox{ in }L^2(0,T;L^2)\vsp 
\Delta\Phi_\vp(z)&\to&0&\mbox{ in }L^2(0,T;H\1)\earr$$
Taking into account that $\vp\Phi_\vp(z)-\Delta\Phi_\vp(z)=z$, we infer that $z=0$ on $(0,T)\times\rrd$, as claimed.\end{proof}

\begin{remark}\label{r4.2}\rm As seen from the above proof, the uniqueness Theorem \ref{t4.1} follows under weaker assumptions that (i)--(iv). For instance, the last conditions in (ii)--(iii) and \eqref{e1.2} are no longer necessary in this case.
\end{remark}

Now, we shall prove for later use a similar uniqueness result for the distributional solution  $v$ to the "freezed" linear version of NFPE \eqref{e4.3}, that~is, 	
\begin{equation}\label{e4.15}
\barr{l}
\dd\frac{\pp v}{\pp t}-\Delta\(\frac{\beta(u)}u\,v\)+\divv((Db(u)+K*u)v)=0\mbox{ in }\cald'(0,T)\times\rrd,\vsp
v(0)=v_0,\earr\end{equation}where $u$ is the solution to \eqref{e1.1} given by Theorem \ref{t2.1}.
In other words,
\begin{equation}\label{e4.16}
\barr{l}
\dd\int_{Q_T}\(\frac{\pp\vf}{\pp t}+\frac{\beta(u)}u\,\Delta\vf+(Db(u)+K*u)\cdot\nabla\vf\)dtdx\vsp\qquad+\dd\int_\rrd v_0(z)\vf(0,x)dx,\ \ff\vf\in C^\9_0([0,T)\times\rrd),\earr\end{equation}	
We have
\begin{theorem}\label{t4.3} Let $u\in L^\9(Q_T)\cap L^1(Q_T)$ be the solution to \eqref{e1.1} and let  $v_1,v_2\in L^1(Q_T)\cap L^\9(Q_T)$ be two distributional solutions to \eqref{e4.5}, which satisfy \eqref{e4.4}. Then, $v_1\equiv v_2$.
\end{theorem}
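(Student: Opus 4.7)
The plan is to repeat, \emph{mutatis mutandis}, the regularization argument used in the proof of Theorem \ref{t4.1}, exploiting the linearity of \eqref{e4.15} in $v$. Setting $z=v_1-v_2$, the distribution $z\in L^1(Q_T)\cap L^\9(Q_T)$ satisfies the homogeneous linear equation
$$\frac{\pp z}{\pp t}-\Delta\(\frac{\beta(u)}{u}\,z\)+\divv((Db(u)+K*u)z)=0\ \text{in}\ \cald'(Q_T),$$
together with the initial trace condition \eqref{e4.4}. The coefficient $a(t,x):=\beta(u(t,x))/u(t,x)$ makes sense as a bounded measurable function because $\beta(0)=0$ and $\beta'\ge\alpha>0$ give $\alpha\le a\le\|\beta'\|_{L^\9(0,|u|_\9)}$, and $u\in L^\9(Q_T)$ by Theorem \ref{t2.1}. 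Likewise the drift coefficient is bounded: $|Db(u)|_\9\le|D|_\9\|b\|_{L^\9(0,|u|_\9)}$, while hypothesis (v) together with $u\in L^1\cap L^\9$ yields, by the splitting $K=\calx_{B_1}K+\calx_{B^c_1}K$, the uniform bound $|K*u|_\9\le\|K\|_{L^1(B_1)}|u|_\9+\|K\|_{L^\9(B^c_1)}|u|_1\le C$.

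Next, apply the regularizing operator $\Phi_\vp=(\vp I-\Delta)\1$ to the equation for $z$ and set $w:=az$ (so $|w|\le C|z|$ pointwise). This gives, for a.e. $t\in(0,T)$,
$$\frac{d}{dt}\Phi_\vp(z(t))+w(t)-\vp\Phi_\vp(w(t))+\divv\Phi_\vp((Db(u)+K*u)z(t))=0.$$
Introduce $h_\vp(t)=(\Phi_\vp(z(t)),z(t))_2=\vp|\Phi_\vp(z(t))|^2_2+|\nabla\Phi_\vp(z(t))|^2_2$; then $h_\vp$ is absolutely continuous on $[0,T]$ and
$$h'_\vp(t)=-2(w(t),z(t))_2+2\vp(\Phi_\vp(w(t)),z(t))_2+2((Db(u)+K*u)z(t),\nabla\Phi_\vp(z(t)))_2.$$
Using $(w,z)_2\ge\alpha|z|^2_2$ and the Cauchy--Schwarz/Young inequalities on the last term (whose coefficient is bounded in $L^\9$), one arrives at
$$h'_\vp(t)+\alpha|z(t)|^2_2\le Ch_\vp(t)+2\vp|(\Phi_\vp(w(t)),z(t))_2|,\quad\text{a.e. }t\in(0,T).$$

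The remaining task is to show that the $\vp$-term vanishes in the limit. Since $|w|\le C|z|$ and $z\in L^1\cap L^\9$, the very same Sobolev--Nirenberg--Gagliardo computation used in \eqref{e4.14aa}--\eqref{e4.14aaaa} for $d\ge3$, respectively the two-dimensional argument based on the logarithmic kernel $E(x)=\pi\1\log|x|\1$ for $d=2$, yields the estimate
$$\sqrt{\vp}\,|(\Phi_\vp(w(t)),z(t))_2|\le C\ \text{ a.e. }t\in(0,T),$$
with $C$ independent of $\vp$. Combining this with the Gronwall-type differential inequality above and with $h_\vp(0+)=0$ (which follows from \eqref{e4.4} by the same density argument as in the proof of \eqref{e4.10}) gives $h_\vp(t)\le C\sqrt{\vp}\exp(Ct)$, so $h_\vp(t)\to0$ uniformly on $[0,T]$. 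Passing to the limit as in the conclusion of the proof of Theorem \ref{t4.1}, we obtain $\nabla\Phi_\vp(z)\to0$ in $L^2(0,T;L^2)$ and $\sqrt{\vp}\Phi_\vp(z)\to0$ in $L^2(0,T;L^2)$, whence the identity $\vp\Phi_\vp(z)-\Delta\Phi_\vp(z)=z$ forces $z\equiv0$ on $Q_T$.

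The main obstacle is, as in Theorem \ref{t4.1}, controlling the convolution term $((K*u)z,\nabla\Phi_\vp(z))_2$ when $K$ is singular; this is precisely where hypothesis (v) enters the estimate $|K*u|_\9\le C$, which replaces the more delicate $L^2$-type splittings needed in Theorem \ref{t4.1} for $K*(u_1-u_2)$. Beyond this, the argument is a direct transcription of the one already given for the nonlinear equation.
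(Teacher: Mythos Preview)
Your proposal is correct and follows essentially the same route as the paper's own proof: set $z=v_1-v_2$, apply $\Phi_\vp=(\vp I-\Delta)^{-1}$, differentiate $h_\vp(t)=(\Phi_\vp(z(t)),z(t))_2$, use $\alpha\le\beta(u)/u\le C$ together with the $L^\9$-bound on $Db(u)+K*u$ (the latter via hypothesis (v)) to absorb the drift term, and then invoke the same $\sqrt{\vp}$-estimate on $(\Phi_\vp(w),z)_2$ from the proof of Theorem \ref{t4.1}. Your write-up is in fact slightly more explicit than the paper's outline in identifying where (v) is used to bound $|K*u|_\9$.
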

\begin{proof} Since the proof is identical with that of Theorem \ref{t4.1}, it will be outlined only. Let $z=v_1-v_2.$ We have
	$$\frac{\pp z}{\pp t}-\Delta\(\frac{\beta(u)}u\,z\)+\divv((Db(u)+(K*u))z)=0\mbox{ in }\cald'((0,T)\times\rrd),$$
and, applying the operator $\Phi_\vp$, we get
	$$\barr{r}
	\dd\frac{\pp}{\pp t}\,\Phi_\vp(z)+\frac{\beta(u)}u\,z+\vp\Phi_\vp\(\frac{\beta(u)}u\,z\)
	+\divv(\Phi_\vp((Db(u)+(K*u))z))=0,\\\mbox{a.e. in }(0,T)\times\rrd.\earr$$
	Now, taking into account that $u\in L^\9(Q_T)$, it follows by hypothesis (i) that
	$$0<\alpha\le\frac{\beta(u)}u\le\alpha_1,\ \mbox{ a.e. in }(0,T)\times\rrd,$$ and so we get as above, that	
	  $h_\vp(t)=(\Phi_\vp(z(t)),z(t))_2,$ satisfies the equation
	  
	$$\barr{l}
	h'_\vp(t)+2\alpha|z(t)|^2_2\le2\vp\(\Phi_\vp\(\dd\frac{\beta(u(t))}{u(t)}\,z(t)\),z(t)\)_2\vsp 
\qquad	+(Db(u(t))+(K*u(t)) z(t),\nabla\Phi_\vp(z(t))_2,\mbox{ a.e. }t\in(0,T).\earr$$
Moreover, by \eqref{e4.4} it follows that $h_\vp(0+)=0$ and this yields
	$$ 	h'_\vp(t)+\dd\frac\alpha2\,|z(t)|^2_2
	\le Ch_\vp(t)+2\vp
	\(\Phi_\vp\(\dd\frac{\beta(u(t))}{u(t)}\,z(t)\),z(t)\)_2,\mbox{ a.e. }t\in(0,T),$$
Arguing as in the proof of {\it Claim} \eqref{e4.13}, we get for $d\ge3$
$$\barr{rcl}
\left|\(\Phi_\vp\(\frac{\beta(u(t))}{u(t)}\,
z(t)\),z(t)\)_2\right|
&\le& 
\(\Phi_\vp\(\frac{\beta(u(t))}{u(t)}\,
z(t)\),\frac{\beta(u(t))}{u(t)}\,
z(t)\)^{\frac12}_2\vsp 
((\Phi_\vp(z(t)),z(t))_2)^{\frac12}&\le& C\vp^{-\frac12}(|z(t)|^{\frac{d-1}d}_\9|z(t)|^{\frac{d+1}d}_1)\le C\vp^{-\frac12},\earr$$
and, respectively,
$$\left|\(\Phi_\vp\(\frac{\beta(u(t))}{u(t)}\,z(t)\),z(t)\)_2\right|\le C,\ \ff t\in(0,T),$$if $d=2$.

This implies as in the proof of Theorem \ref{t4.1} that $h_\vp(t)\to0$, $\ff t\in[0,T]$, and so $z\equiv0$, as claimed.
\end{proof}

\section{The existence for the \mkv\\ equation}\label{s5}
\setcounter{equation}{0}

We shall study here the McKean--Vlasov equation \eqref{e1.4} under hypotheses (i)--(v), where $u$ is the solution to NFPE \eqref{e1.1} with $u_0\in\calp\cap L^\9$ given by Theorem \ref{t2.1}. 

As seen in \eqref{e2.8a}, $t\to u(t)$ is narrowly continuous and, therefore, by the superposition principle first applied for the linearized equation \eqref{e4.15} and derived afterwards for the NFPE \eqref{e1.1}, it follows (see \cite{5}, \cite{20} and Theorem~\ref{t5.1} in \cite{8}) the existence  of a probability weak solution $X_t$ to the McKean--Vlasov equation \eqref{e1.4} with the law density $u(t)$. More precisely, we have

{\it There is a stochastic basis $(\ooo,\calf,\mathbb{P})$ with normal filtration $(\calf_t),_{r>0}$ and an $(\calf_t)$-Brownian motion $(W_t,\,t>0)$ such that $X_t$, $t>0$, is an $(\calf_t),_{t>0}$ adapted stochastic process on $(\ooo,\calf,\mathbb{P})$, which satisfies SDE}
\begin{eqnarray}
&&\hspace*{-2mm}dX_t=(D(X_t)b(u(t,X_t)){+}(K{*}u(t,{\cdot}))X_t)dt
{+}\!\(\!\sqrt{\dd\frac{2\beta(u(t,X_t))}{u(t,X_t)}}\!\!\)\!dW_t,\ff t>0,\nonumber\\
&&\hspace*{-2mm}X(0)=X_0,\label{e5.1}\end{eqnarray}and
\begin{equation}
  \call_{X_t}(x)=u(t,x),\ \ff t>0,\ u_0(x)=\call_{X_0}(x)=(\mathbb{P}\circ X^{-1}_0)(x),\ \ff x\in\rrd.  \label{e5.2}
\end{equation}
We may view \eqref{e5.1}--\eqref{e5.2} as a probability representation of solutions $u$ to the NFPE \eqref{e1.1}.
Under hypotheses (i)--(v) we have also from the uniqueness of NFPE \eqref{e4.3} and its linearized version \eqref{e4.15} the weak  uniqueness in law for the solution to the McKean--Vlasov SDE \eqref{e5.1} .  Namely, we~have

\begin{theorem}\label{t5.1}  Let $X_t$ and $\wt X_t$ be   probability weak solutions to \eqref{e5.1} on the stochastic basis $(\ooo,\calf,(\calf_t),\mathbb{P})$, $(\wt\ooo,\wt\calf,(\wt\calf_t),\wt{\mathbb{P}})$ such that  
\begin{eqnarray}
&\call({X_t})=u(t,\cdot),\ \call(\wt X_t)=\wt u(t,\cdot).\label{e5.3}\\[1mm]
&u,\wt u\in L^1(Q)\cap L^\9(Q),\ Q=(0,\9)\times\rrd.\label{e5.4}
\end{eqnarray}
Then $X_t$ and $\wt X_t$ have the same marginal laws, that is,	
\begin{equation}\label{e5.5}
\mathbb{P}\circ X^{-1}_t=\wt{\mathbb{P}}\circ\wt X^{-1}_t,\ \ff t>0.\end{equation}	
\end{theorem}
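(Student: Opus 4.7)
The strategy is to reduce the weak uniqueness in law for the McKean--Vlasov SDE \eqref{e5.1} to the uniqueness of distributional solutions of the NFPE \eqref{e1.1} furnished by Theorem \ref{t4.1}. Concretely, I will verify that both marginal densities $u$ and $\wt u$ are distributional solutions of \eqref{e1.1} sharing the initial datum $u_0$ and that they satisfy the narrow continuity condition \eqref{e4.4}; Theorem \ref{t4.1} then forces $u\equiv\wt u$ on each strip $Q_T=(0,T)\times\rrd$, which is exactly the desired identity \eqref{e5.5}.

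The first step is to derive the distributional NFPE from the SDE. Fix $\vf\in C_0^\infty([0,T)\times\rrd)$ and apply It\^{o}'s formula to $\vf(t,X_t)$ along \eqref{e5.1}:
$$\vf(t,X_t)-\vf(0,X_0) = \int_0^t\Big(\vf_s + (Db(u)+K*u)\cdot\nabla\vf + \tfrac{\beta(u)}{u}\Delta\vf\Big)(s,X_s)\,ds + M_t,$$
where $M$ is a true martingale on $[0,T]$ thanks to $u\in L^\infty(Q)$ and the compact support of $\vf$. Taking expectations, using $\E\psi(X_s)=\int_\rrd\psi(x)u(s,x)\,dx$ and the identity $\tfrac{\beta(u)}{u}\cdot u=\beta(u)$, and then integrating by parts in $s$, one recovers precisely the weak formulation \eqref{e4.3} for $u$ (the boundary contribution at $s=0$ yielding $\int_\rrd u_0(x)\vf(0,x)\,dx$). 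The same derivation applied to $\wt X_t$ shows that $\wt u$ is a distributional solution of \eqref{e1.1} as well.

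Next, I verify the hypotheses of Theorem \ref{t4.1}. Both $u$ and $\wt u$ lie in $L^1(Q_T)\cap L^\infty(Q_T)$ by \eqref{e5.4}, and $u(0,\cdot)=\wt u(0,\cdot)=u_0$ since both processes start from $X_0$ with law $u_0$. For the narrow continuity at $t=0$, the sample-path continuity of $X_t$ combined with dominated convergence yields, for every $\psi\in C_0^\infty(\rrd)$,
$$\int_\rrd u(t,x)\psi(x)\,dx = \E\psi(X_t) \xrightarrow[t\downarrow 0]{} \E\psi(X_0) = \int_\rrd u_0(x)\psi(x)\,dx,$$
and the analogous statement for $\wt u$. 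Subtracting gives \eqref{e4.4} for the pair $(u,\wt u)$. Theorem \ref{t4.1}, applied on each $Q_T$, then gives $u=\wt u$ a.e.\ on $Q$, whence $\mathbb{P}\circ X_t^{-1}=u(t,x)\,dx=\wt u(t,x)\,dx=\wt{\mathbb{P}}\circ\wt X_t^{-1}$ for every $t>0$.

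The principal technical point is the rigorous execution of the It\^{o} derivation above. The drift $D(X_t)b(u(t,X_t))+(K*u(t,\cdot))(X_t)$ is bounded by hypotheses (ii)--(v) together with $u\in L^\infty(Q)$, while the diffusion coefficient $\sqrt{2\beta(u(t,X_t))/u(t,X_t)}$ is bounded above by a constant depending on $\|u\|_\infty$ and bounded below by $\sqrt{2\alpha}$ thanks to (i); hence all the stochastic and ordinary integrals that appear are finite and the passage to expectations is legitimate. It is worth noting that the linearized uniqueness Theorem \ref{t4.3} is \emph{not} actually needed for the present weak-uniqueness-in-law statement: it enters only in the superposition-principle step used to establish \emph{existence} of a weak solution to \eqref{e5.1}.
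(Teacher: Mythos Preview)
Your derivation of \eqref{e5.5} is correct and coincides with the first half of the paper's own proof: via It\^o's formula both densities $u,\wt u$ are distributional solutions of the NFPE with the same initial datum, Theorem~\ref{t4.1} forces $u=\wt u$, and since the one-time marginals of $X$ and $\wt X$ have exactly these densities, \eqref{e5.5} follows immediately (your narrow-continuity observation upgrades the a.e.-in-$t$ equality to all $t>0$).

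The difference from the paper lies in your closing remark. After obtaining $u=\wt u$, the paper does \emph{not} stop; it observes that $X$ and $\wt X$ are now weak solutions of one and the same \emph{linear} SDE (coefficients frozen at the common $u$), so their laws on path space both solve the martingale problem for the linearized Kolmogorov operator $v\mapsto\Delta\!\big(\tfrac{\beta(u)}{u}v\big)-(Db(u)+K*u)\cdot\nabla v$. It then invokes Theorem~\ref{t4.3} together with Trevisan's transfer-of-uniqueness lemma to conclude $\mathbb{P}\circ X^{-1}=\wt{\mathbb{P}}\circ\wt X^{-1}$ on $C([0,\infty);\rrd)$, i.e., full uniqueness in law on path space. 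This is strictly stronger than \eqref{e5.5} and is precisely what is needed for the nonlinear Markov property announced in Remark~\ref{r5.2}. So you are right that Theorem~\ref{t4.3} is dispensable for the bare equality of one-dimensional marginals \eqref{e5.5}, but mistaken about its role: it is not used in the superposition/existence step at all---it is used in this uniqueness proof, to lift marginal uniqueness to uniqueness of the whole law. Your shorter route buys simplicity for the stated conclusion; the paper's route buys the stronger conclusion actually required downstream.
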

 
\begin{proof} By It\^o's formula, both functions $u$ and $\wt u$ satisfy \eqref{e4.1}--\eqref{e4.3} and so, by Theorem \ref{t4.1}, $u=\wt u$. Moreover, both marginal laws $\mathbb{P}\circ X^{-1}_t$ and $\wt{\mathbb{P}}\circ \wt X^{-1}_t$ satisfy the martingale problem with the initial condition $u_0$ for the linearized Kolmogorov operator
	$$v\to\Delta\(
	\frac{\beta(u)}{u}\,v\)-(Db(u)+K*u)\cdot\nabla v$$and so, by Theorem 4.3 and the transfer of uniqueness Lemma 2.12 in \cite{21} (see also \cite{22b} and \cite{8}, Sect. 5), it follows that \eqref{e5.5} holds.
\end{proof}

\begin{remark}\label{r5.2}\rm As an immediate consequence of Theorem \ref{t5.1} is also the fact that the family of marginal probability laws $\{t\to\mathbb{P}\circ X^{-1}_t\}_{t\ge0}$, where $X_t$ is the unique probabilistically weak solution to the McKean--Vlasov SDE \eqref{e5.1},  is a nonlinear Markov process on $\calb(\ooo)$ (see Definition 5.1 and Theorem 5.3 in \cite{8}).
\end{remark}

We recall that (see, e.g., \cite{8}, p. 195) that a {\it weak probability solution} $X_t$ to \eqref{e5.1}--\eqref{e5.2} is said to be a {\it strong solution} if it is a measurable function of the Brownian motion $W_t$ or, equivalently, if it is adapted with respect to the completed natural filtration $(\calf^{W_t}_t)_{t>0}$ of $W_t$. It turns out that under appropriate conditions on $b$, $D$ and $K$, the weak probability solution $X_t$ is a strong solution to the McKean--Vlasov equation \eqref{e5.1}--\eqref{e5.2}. Namely, we have

\begin{theorem}\label{t5.3} Assume that, besides hypotheses {\rm(i)--(v)}, the following conditions hold
	\begin{eqnarray}
	&t\to \dd\frac{\beta(r)}r\in C^1_b[0,\9),\ D\in C^1(\rrd;\rrd), \label{e5.6}\\[1mm]
		&\dd\frac\pp{\pp x_i}\,K\in L^2(B^c_1;\rrd),\ i=1,...,d.\label{e5.7}
	\end{eqnarray}	
	Then, the weak probability solution $X_t$ to {\rm\eqref{e5.1}--\eqref{e5.2}} is the unique strong solution to the McKean--Vlasov equation  {\rm\eqref{e5.1}--\eqref{e5.2}}. 
	\end{theorem}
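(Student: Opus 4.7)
The plan is to invoke the Yamada--Watanabe theorem: weak probability existence of a solution $X_t$ to \eqref{e5.1}--\eqref{e5.2} is recorded just before Theorem \ref{t5.1}, and uniqueness in law of the marginals is Theorem \ref{t5.1}. It therefore suffices to establish pathwise uniqueness for the \emph{frozen} It\^o SDE obtained by substituting the uniquely determined density $u$ from Theorem \ref{t2.1} into the coefficients. Setting
$$b_0(t,x):=D(x)b(u(t,x))+(K*u(t,\cdot))(x),\quad\sigma_0(t,x):=\sqrt{2\beta(u(t,x))/u(t,x)},$$
every weak solution of \eqref{e5.1}--\eqref{e5.2} is a weak solution of the classical SDE $dX_t=b_0(t,X_t)dt+\sigma_0(t,X_t)dW_t$ on the given stochastic basis; pathwise uniqueness for this frozen SDE combined with weak existence produces, via Yamada--Watanabe, the adaptedness of $X_t$ to the augmented natural filtration $(\calf^{W_t}_t)$ of $W$ and hence the desired strong existence and uniqueness.

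Next I would verify that hypotheses \eqref{e5.6}--\eqref{e5.7}, together with the regularity $u\in L^\9\cap L^2_{\rm loc}(0,\9;H^1)$ supplied by Theorem \ref{t2.1}, render the frozen coefficients bounded, Sobolev-regular in $x$, with $\sigma_0$ uniformly non-degenerate. Hypothesis (i) gives $\sigma_0^2=2\beta(u)/u\ge 2\alpha>0$ (using $\beta(0)=0$, $\beta'\ge\alpha$); \eqref{e5.6} gives $r\mapsto\sqrt{2\beta(r)/r}\in C^1_b[0,\9)$, so $\sigma_0$ is bounded, and the chain rule yields $\nabla_x\sigma_0\in L^2_{\rm loc}$. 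Similarly, $Db(u)$ is bounded with weak spatial gradient in $L^2_{\rm loc}$, by \eqref{e5.6}, (iii) and $\nabla u\in L^2_{\rm loc}$. For the convolution term I split $K=\calx_{B_1}K+\calx_{B_1^c}K$: by (iv), $\calx_{B_1}K\in L^1$, and the distributional identity $\nabla(\calx_{B_1}K*u)=\calx_{B_1}K*\nabla u$ yields an $L^2_{\rm loc}$ gradient via Young's inequality; by \eqref{e5.7}, $\nabla(\calx_{B_1^c}K)\in L^2$, whence $\nabla(\calx_{B_1^c}K*u)=\nabla(\calx_{B_1^c}K)*u$ is bounded.

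With $b_0\in L^\9\cap L^2_{\rm loc}(0,\9;H^1_{\rm loc})$, $\sigma_0$ bounded, uniformly non-degenerate and with $\nabla_x\sigma_0\in L^2_{\rm loc}$, pathwise uniqueness for the frozen SDE follows from the Krylov--R\"ockner/Zvonkin theory of multidimensional SDEs with singular drifts and non-degenerate diffusion in a Sobolev class. Yamada--Watanabe then guarantees that the weak probability solution $X_t$ is adapted to $(\calf^{W_t}_t)$ and is the unique strong solution to the McKean--Vlasov system \eqref{e5.1}--\eqref{e5.2}.

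The main obstacle is controlling the spatial gradient of $K*u$ near the origin, where $K$ is genuinely singular and \eqref{e5.7} gives no information. The key trick is to shift the derivative from $K$ onto $u$ on $B_1$ via the distributional identity $\nabla(\calx_{B_1}K*u)=\calx_{B_1}K*\nabla u$, which requires only $\calx_{B_1}K\in L^1$ from (iv) and the Sobolev bound $\nabla u\in L^2_{\rm loc}$ from Theorem \ref{t2.1}. Hypothesis \eqref{e5.7} is introduced precisely to handle the far-field contribution, where this trick is unavailable because $u$ need not decay.
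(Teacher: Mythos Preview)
Your argument follows the paper's own route: reduce via (restricted) Yamada--Watanabe to pathwise uniqueness of the frozen SDE, verify $H^1$-spatial regularity of $b_0,\sigma_0$ by splitting $K$ into $B_1$ and $B_1^c$ parts and shifting the $x$-derivative onto $u$ on $B_1$, and then invoke an external pathwise-uniqueness result for SDEs with Sobolev-regular coefficients.

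Two small points where your write-up deviates from the paper and could be tightened. First, the identity $\nabla(\calx_{B_1^c}K*u)=\nabla(\calx_{B_1^c}K)*u$ with $\nabla(\calx_{B_1^c}K)\in L^2$ is not literally correct: the distributional gradient of $\calx_{B_1^c}K$ carries a surface measure on $\{|x|=1\}$, so a boundary integral $\int_{|\bar x|=1}K(\bar x)u(t,x-\bar x)\,d\bar x$ appears (the paper writes it out explicitly); it is harmless since $K\in C^1(\rrd\setminus0)$ and $u\in L^\9$, but it should not be dropped. Second, for the final pathwise-uniqueness step the paper appeals to the Crippa--De\,Lellis Lagrangian-flow estimates \cite{13} and to \cite{14}, which are tailored to coefficients with Sobolev (rather than Lipschitz) spatial regularity; the Krylov--R\"ockner/Zvonkin framework you cite typically assumes a more regular diffusion, so the paper's references are the more accurate ones here given that $\sigma_0$ is only $H^1$ in $x$.
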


\begin{proof} By the restricted Yamada--Watanabe theorem (see, e.g., \cite{14}, \cite{15}) the conclusion follows if one shows that the stochastic differential equation  \eqref{e5.1} with $u$ given by Theorem \ref{t2.1} has the path  uniqueness property for the weak probability solutions, that is, for every pair of weak solutions $(X,W,(\ooo,\mathbb{P},\calf,(\calf_t)_{t>0})),$ $(\wt X,W,(\ooo,\mathbb{P},\calf,(\calf_t)_{t>0})),$ with $X(0)=\wt X(0)$, we have $\sup|X(t)-\wt X(t)|=0$, $\mathbb{P}$-a.s. In our case, this means the pathwise uniqueness of weak probability solutions to the stochastic differential equation
\begin{equation}\label{e5.8}
\barr{l}
dX_t=f_1(t,X_t)dt+f_2(t,X_t)dW_t\vsp
X(0)=X_0\in\rrd,\earr
\end{equation}in the stochastic basis $(\ooo,\calf,\mathbb{P},W_t)$, where
$$\barr{rcll}
f_1(t,x)&\equiv&D(x)b(u(t,x))+(K*u(t,\cdot))(x),\ t>0,\ x\in\rrd,\vsp
f_2(t,x)&\equiv&\sqrt{\dd\frac{2\beta(u(t,x))}{u(t,x)}},\ t>0,\ x\in\rrd,\earr$$
and $u$ is the solution to the \FP\ equation \eqref{e1.1} given by Theorem \ref{t2.1}. 
The main difficulty for the proof of the path uniqueness for equation \eqref{e5.8} is that the coefficients $f_i$, $i=1,2,$ are not Lipschitz  with respect to the spatial variable $x\in\rrd$ but are only in $H^1$. 
We have indeed for $D_i=\frac\pp{\pp x_i},\ i=1,...,d,$
$$\barr{rcl}
|D_if_2(t,x)|&=&
\left|\(\dd\frac{\beta(u)}u\)'\(\frac{\beta(u)}u\)^{-\frac12}D_iu(t,x)\right| 
 \le  C|D_iu(t,x)|,\\
 &&\hfill \ff(t,x)\in(0,\9)\times\rrd,\earr$$
$$\barr{rcl}
|D_i(D(x)b(u(t,x)))|&\le&|D|_\9|b(u)|_\9+|D|_\9|b'(u)|_\9|\nabla u(t,x)|\vsp
&\le& C|D_iu(t,x)|,\ (t,x)\in(0,\9)\times\rrd,\earr$$
$$\barr{l}
D_i(K*u(t))(x)=\dd\int_{B_1}K(\bar x)D_iu(t,x-\bar x)dx+\dd\int_{B^c_1}D_iK(\bar x)u(t-\bar x)dx\\
\qquad+\dd\int_{[|\bar x|=1]}K(\bar x)u(t,x-\bar x)d\bar x,\  i=1,...,d,\,(t,x)\in(0,\9)\times\rrd. 
\earr$$
Taking into account that, as seen by Theorem \ref{t2.1}, $u\in L^\9(0,T;H^1)$, these formulae imply that
$$\barr{ll}
|D_if_1(t,\cdot)|_2\le\!\!\!& C|D_iu(t,\cdot)|_2+\|K\|_{L^1(B_1)}
|D_iu(t,\cdot)|_2\vsp
&+\|D_iK\|_{L^1(B^c_1)}|u(t,\cdot)_2+C\|K\|_{L^\9(B_1)}|u(t,u(t))|_\9,\vsp&\hfill \ff t\in[0,T].\earr$$
By the above estimations, we see that $f_1,f_2$ have the $H^1$-Sobolev regularity with respect to the spatial variable $x$ and so the pathwise uniqueness of the weak solution $X$ to \eqref{e5.8} follows 
by the same argument as that used for the uniqueness of Lagrangian flows ge\-ne\-ra\-ted by nonlinear differential equations in $\rrd$ with coefficients in $L^1(0,T;H^1)$ (see Theorem 2.9 in \cite{13}). (See also \cite{14} for the proof of the pathwise uniqueness of weak solutions to a SDE under similar conditions on coefficients.)
\end{proof}

\n{\bf Acknowledgement.} This work was supported by the DFG through SFB 1283/2 2021-317210226 and by a grant of Ministry oif Research, Innovation and Digitization, CNCS-UEFISCDI project  PN-III-P4-PCE-2021-0006 within PNCDI III. 

\bk\n{\bf Data Availability Statement.} Data sharing is not applicable to this article as no datasets were generated or analyzed during the current study.

\bk\n{\bf Conflict of interest.} {\it The author declares that he has no conflict of interest.}

\end{document}